\title{New examples of period collapse} \author{Dan
  Cristofaro-Gardiner\footnote{Mathematics Department, Harvard
    University, Cambridge MA, USA.  Electronic address:
    gardiner@math.harvard.edu}, Teresa Xueshan Li\footnote{Mathematics
    and Statistics, Southwest University, Chongqing,
    P.R. China. Electronic address: pmgb@swu.edu.cn} , and Richard
  Stanley\footnote{Department of Mathematics, Massachussets Institute
    of Technology, Cambridge MA, USA.  Electronic address:
    rstan@math.mit.edu.}} \date{}
\numberwithin{equation}{section}
\numberwithin{figure}{section}
\newtheorem{theorem}{Theorem}[section]
\newtheorem{proposition}[theorem]{Proposition}
\newtheorem{corollary}[theorem]{Corollary}
\newtheorem{lemma}[theorem]{Lemma}
\newtheorem{lemma-definition}[theorem]{Lemma-Definition}
\newtheorem{claim}[theorem]{Claim}
\theoremstyle{definition}
\newtheorem{remark}[theorem]{Remark}
\newtheorem{example}[theorem]{Example}
\newcommand{\eqdef}{\;{:=}\;}
\newcommand{\op}{\operatorname}
\newcommand{\st}{\,|\,}
\newcommand{\bpm}{\begin{pmatrix}}
\newcommand{\epm}{\end{pmatrix}}
\renewcommand{\epsilon}{\varepsilon}
\begin{document}

\setcounter{tocdepth}{2}

\maketitle

\begin{abstract}
``Period collapse" refers to any situation where the period of the
  Ehrhart function of a polytope is less than the denominator of that
  polytope.  We study several interesting situations where this
  occurs, primarily involving triangles.  For example: 1) we determine
  exactly when the Ehrhart function of a right triangle with legs on
  the axes and slant edge with irrational slope is a polynomial; 2) we
  find triangles with periods given by any even-index k-Fibonacci
  number, and larger denominators; 3) we construct several higher
  dimensional examples.  Several related issues are also discussed,
  including connections with symplectic geometry.
\end{abstract}

\section{Introdution}

\subsection{Background}

Let $\mathcal{P} \subset \mathbb{R}^{d}$ be a convex polytope.  The
counting function
\[ I_{\mathcal{P}}(t):=\#(t\mathcal{P}\cap \mathbb{Z}^{d})\]
for a positive integer $t$ is called the {\em Ehrhart function} of
$\mathcal{P}$.  A classical result of Ehrhart \cite{Ehrhart1977}
asserts that when $\mathcal{P}$ is rational, $I_{\mathcal{P}}(t)$ is a
``quasipolynomial" in $t$.  This means that the function
$I_{\mathcal{P}}(t)$ is a polynomial in $t$, with periodic
coefficients of integral period.  The minimum common period of these
coefficients is called the {\em period} of $\mathcal{P}$.  While it is
known that the period of $\mathcal{P}$ is bounded from above by the
minimum integer $\mathcal{D}$ such that the vertices of $\mathcal{D}
\cdot \mathcal{P}$ are integral, called the {\em denominator} of
$\mathcal{P}$, the precise relationship between $\mathcal{P}$ and its
period can be quite subtle.

For example, in their study of vertices of Gelfand-Tsetlin polytopes,
De Leora and McAllister \cite{Loera-McAllister2004} constructed an
infinite family of non-integral polytopes for which the Ehrhart
function is still a polynomial. Later, McAllister and Woods
\cite{McAllister-Woods2005} extended this result to any dimension
$d\geq 2$. They showed that, given $\mathcal{D}$ and $s$ such that
$s|\mathcal{D}$, there exists a $d$-dimensional polytope with
denominator $\mathcal{D}$ whose Ehrhart quasi-polynomial has period
$s$.  Other interesting related work appears in (for example)
\cite{Beck-Robins-Sam2009, Haase-McAllister2007, Gardiner-Kleinman,
  Woods2005}.

Any situation where the period of $\mathcal{P}$ is smaller than its
denominator is called {\em period collapse}.  In this paper, we
further study this phenomenon through several interesting examples,
which we now explain.

\subsection{Irrational triangles}

As alluded to above, traditionally the objects of study in Ehrhart
theory are {\em rational} polytopes.  The first question we are
concerned with here is how frequently an irrational polytope has an
Ehrhart function that is a quasi-polynomial or a polynomial.  We
should think of the denominator of an irrational polytope as being
infinite, so our question is about a particularly extreme form of
period collapse.

An interesting class of examples comes from fixing positive numbers
$u$ and $v$ with $u/v$ irrational, and studying the Ehrhart function
of the triangle $\mathcal{T}_{u,v} \subset \mathbb{R}^2$ with vertices
$(0,0), (1/u,0)$, and $(0,1/v)$.  It turns out that one can completely
determine when the Ehrhart function of such a polytope is a
quasipolynomial or a polynomial.  To state our result, first recall
that any polytope whose Ehrhart function is a polynomial is called
{\em pseudo-integral}.  In analogy with this, we will call an
(irrational) polytope {\em pseudo-rational} if its Ehrhart function is
a quasipolynomial.  Of course, if $\mathcal{T}$ is pseudo-rational,
then any scaling of $\mathcal{T}$ by a positive integer is as well; we
will therefore call a pseudo-rational triangle {\em primitive} if no
scaling $1/t \cdot \mathcal{T}$ for an integer $t>1$ is
pseudo-rational.

We can now state precisely which triangles in the family
$\mathcal{T}_{u,v}$ are pseudo-rational and pseudo-integral.  In fact,
$u$ and $v$ must be certain special conjugate quadratic
irrationalities:

\begin{theorem}
\label{thm:irrationalperiodcollapse}
Let $u$ and $v$ be positive numbers with $u/v$ irrational.
\begin{enumerate}[(i)]
\item  The triangle $\mathcal{T}_{u,v}$ is primitive and
  pseudo-rational if and only if
\begin{align}
\label{eqn:keyequation}
u+v=\alpha \\
1/u+1/v = \beta, \nonumber
\end{align}
for positive integers $\alpha, \beta$.  The period of this
quasipolynomial divides $\alpha$.
\item The triangle $\mathcal{T}_{u,v}$ is primitive and
  pseudo-integral if and only if \eqref{eqn:keyequation} is satisfied,
  and in addition, either $\alpha=1$ or
\[(\alpha,\beta) \in \lbrace (3,3),(2,4) \rbrace.\]
\end{enumerate}
\end{theorem}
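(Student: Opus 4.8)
The plan is to study the counting function $I(t)=\#\{(i,j)\in\mathbb{Z}_{\ge0}^2 : ui+vj\le t\}$ directly, exploiting the self-similarity that the relation $u+v=\alpha\in\mathbb{Z}$ forces upon it. The key observation is that the lattice translation $(i,j)\mapsto(i+1,j+1)$ raises the linear form $ui+vj$ by exactly $u+v=\alpha$, and so carries the lattice points of $t\mathcal{T}_{u,v}$ bijectively onto the points of $(t+\alpha)\mathcal{T}_{u,v}$ lying in the open first quadrant. Counting the remaining points, which sit on the two axes, yields the exact recursion
\[
I(t+\alpha)-I(t)=\left\lfloor\frac{t+\alpha}{u}\right\rfloor+\left\lfloor\frac{t+\alpha}{v}\right\rfloor+1,
\]
valid for every integer $t\ge0$ as soon as $\alpha\in\mathbb{Z}$. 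Writing $g(s)=\lfloor s/u\rfloor+\lfloor s/v\rfloor=s(1/u+1/v)-\bigl(\{s/u\}+\{s/v\}\bigr)$, one sees that $I$ is a quasipolynomial exactly when $g$ is, so everything reduces to the bounded function $h(s)=\{s/u\}+\{s/v\}$.

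For the sufficiency direction I would bring in $1/u+1/v=\beta\in\mathbb{Z}$: then $s/u+s/v=s\beta$ is an integer, forcing $h(s)\in[0,2)\cap\mathbb{Z}$, and since $1/u$ is irrational $h(s)$ cannot vanish for $s\ge1$, so $h(s)=1$ for all $s\ge1$. Hence $g(s)=\beta s-1$, the recursion collapses to the polynomial identity $I(t+\alpha)-I(t)=\beta t+\alpha\beta$, and summing this degree-one polynomial along each residue class modulo $\alpha$ exhibits $I$ as a degree-two quasipolynomial whose period divides $\alpha$, as claimed.

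For necessity I would first extract rationality from the Ehrhart coefficients: the leading coefficient is the area $1/(2uv)$, while the period-average of the linear coefficient is half the total lattice length of the edges, namely $\tfrac12(1/u+1/v)$ (the irrational hypotenuse contributes nothing). Because an integer-valued polynomial has rational coefficients, pseudo-rationality gives $uv,\ 1/u+1/v\in\mathbb{Q}$, hence $u+v\in\mathbb{Q}$, so $u,v$ are conjugate quadratic irrationalities. After using primitivity to normalize the scaling so that $\alpha\in\mathbb{Z}$, the recursion again forces $h$ to be (eventually) periodic, and the crux is to show this compels $\beta\in\mathbb{Z}$. Here I would use the exact description $h(s)=\{s\beta\}+\mathbf{1}\!\left[\{s/u\}>\{s\beta\}\right]$ for $s\ge1$: since $\{s/u\}$ equidistributes while $\{s\beta\}$ is periodic, the indicator is a Sturmian-type sequence, aperiodic unless the threshold $\{s\beta\}$ vanishes identically, i.e. unless $\beta\in\mathbb{Z}$. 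I expect this equidistribution step, together with the bookkeeping that turns primitivity into the integrality of both $\alpha$ and $\beta$ under rescaling, to be the main obstacle.

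Finally, for part (ii) I would note that pseudo-integrality means $I$ is the single polynomial; matching the recursion forces the unique candidate $P(t)=\tfrac{\beta}{2\alpha}t^2+\tfrac{\beta}{2}t+1$. Since the period already divides $\alpha$, pseudo-integrality is equivalent to the $\alpha$ residue-class quadratics coinciding, i.e. to the finite system $I(r)=P(r)$ for $0\le r<\alpha$. Evaluating $I(r)$ as an explicit small lattice-point count and comparing with $P(r)$ gives an elementary inequality bounding $\alpha\beta$, which leaves only finitely many pairs to inspect; these reduce to the case $\alpha=1$ (where the period automatically collapses) together with the two sporadic solutions $(\alpha,\beta)\in\{(3,3),(2,4)\}$, each of which I would verify by direct computation.
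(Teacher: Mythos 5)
Your sufficiency argument is correct, and it is essentially the paper's own Step 1 in different packaging: your one-step recursion $I(t+\alpha)-I(t)=\lfloor(t+\alpha)/u\rfloor+\lfloor(t+\alpha)/v\rfloor+1$ is exactly the iterated form of the paper's staircase sum \eqref{eqn:sum}, and the observation that $\beta\in\mathbb{Z}$ forces $\{s/u\}+\{s/v\}=1$ is the same floor-function collapse used there; your treatment of part (ii) (only the constant terms of the residue-class polynomials can differ, and matching them pins down $(\alpha,\beta)$) likewise mirrors the paper's Step 7. Where you genuinely diverge is the crux of necessity. The paper proves the much stronger statement that the Ehrhart function is not $\mathcal{P}$-recursive (Claim~\ref{clm:importantclaim}, Steps 4--6, which is also what yields Theorem~\ref{thm:precursion}), while you only need to exclude quasipolynomiality; your Sturmian observation --- that $h(s)=\{s\beta\}+\mathbf{1}\left[\{s/u\}>\{s\beta\}\right]$ cannot be eventually periodic when the periodic rational threshold $\{s\beta\}$ is sometimes nonzero, because $\{s/u\}$ equidistributes along every arithmetic progression --- is a legitimately simpler route to that weaker conclusion. (Two smaller elisions: the claim that the linear Ehrhart coefficient equals $\tfrac12(1/u+1/v)$, i.e.\ that the irrational hypotenuse contributes only $o(t)$, is precisely the paper's Lemma~\ref{lem:poonenlemma} and itself requires a Weyl equidistribution argument; and neither you nor the paper really verifies the primitivity half of the ``if'' direction.)

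However, there is a genuine gap: your whole necessity analysis presupposes $\alpha\in\mathbb{Z}$, and the step ``after using primitivity to normalize the scaling so that $\alpha\in\mathbb{Z}$'' is not a valid move. Rationality only gives $\alpha=p/q$; if $q>1$, the diagonal translation $(i,j)\mapsto(i+1,j+1)$ shifts the linear form by the non-integer $p/q$, so it produces no recursion between integer dilates, and your function $h$ never enters. Primitivity cannot repair this, because it is a hypothesis that the shrunk triangles $\tfrac1t\mathcal{T}_{u,v}=\mathcal{T}_{tu,tv}$ are \emph{not} pseudo-rational; moreover $I_{\mathcal{T}_{u,v}}$ determines $I_{\mathcal{T}_{qu,qv}}$ only at arguments divisible by $q$, so quasipolynomiality cannot be transferred to the ``normalized'' triangle $\mathcal{T}_{qu,qv}$ --- if it could, you would be contradicting primitivity, not using it. The correct logical shape, and the place where the paper spends nearly all of its effort, is to show that pseudo-rationality with $\alpha=p/q$ forces \emph{both} $\beta\in\mathbb{Z}$ and $\alpha\beta\in\mathbb{Z}$ (i.e.\ $q\mid\beta$); only then is $\mathcal{T}_{qu,qv}$ admissible, hence pseudo-rational by your sufficiency direction, so that primitivity forces $q=1$. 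To prove that statement with your method you must instead use the translation $(i,j)\mapsto(i+q,j+q)$, whose step $q\alpha=p$ is an integer; the resulting recursion replaces your single indicator by a sum of $q$ indicators, namely $q$ points $\left\{(t+p-vi)/u\right\}$, $0\le i\le q-1$, moving rigidly under an irrational rotation and tested against $q$ periodic rational thresholds $\left\{(t+p)\beta-i(\alpha\beta-2)\right\}$, and you must show such a sum can be eventually periodic only if every threshold is integral (which gives exactly $\beta\in\mathbb{Z}$ and $\alpha\beta\in\mathbb{Z}$). This multi-point case is precisely what the paper's $\op{head}$ function over $0\le m\le q-1$ is built to handle. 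As written, your proposal establishes the ``only if'' direction only in the special case $\alpha\in\mathbb{Z}$, which is not enough to conclude the theorem.
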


To simplify the notation, we call a pair of positive numbers $(u,v)$
with $u/v$ irrational and $(u,v)$ satisfying \eqref{eqn:keyequation}
{\em admissible}, and we also call a triangle $\mathcal{T}_{u,v}$
admissible when $(u,v)$ is an admissible pair.  To get a feel for
Theorem~\ref{thm:irrationalperiodcollapse}, the following example,
which we prove in \S\ref{sec:mainthm}, is illustrative.

\begin{example}
\label{ex:smallesttriangle}
Let $u/v$ be irrational.  The pseudo-integral triangle in the family
$\mathcal{T}_{u,v}$ with smallest area corresponds to
$(u,v)=(\tau^2,1/\tau^2)$, where $\tau = \frac{1 + \sqrt{5}}{2}$ is
the Golden Ratio.
\end{example}

One of the key steps in the proof of the ``only if" direction of the
first bullet point involves a slightly stronger statement than what is
required, which is of potentially independent interest.  Recall that a
sequence $f(n)$ is $\mathcal{P}$-{\em recursive}, of order $k$, if
there are polynomials $p_0,\ldots,p_k$, not all 0, such that the
recurrence relation
\[ p_k(n+k)f(n+k) + \cdots + p_0(n)f(n) = 0\]
holds for all nonnegative integer $n$.  In general, it can be
difficult to show that a sequence is not $\mathcal{P}$-recursive.
However, natural examples of sequences which are not
$\mathcal{P}$-recursive are given by the following.

\begin{theorem}
\label{thm:precursion}
Let $u$ and $v$ be positive numbers with $u/v$ irrational, and assume
that $1/u + 1/v$ and $u+v$ are rational, but $\mathcal{T}_{u,v}$ is
not a positive integer scaling of a primitive pseudo-rational
triangle.  Then the sequence $f(n) \eqdef  I_{\mathcal{T}_{u,v}}(n)$
is not $\mathcal{P}$-recursive.
\end{theorem}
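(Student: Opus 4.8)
The plan is to work from the explicit description $f(n) = \#\{(i,j)\in\Z_{\ge 0}^2 : iu+jv\le n\}$, which holds because the dilate $n\,\mc{T}_{u,v}$ has legs $n/u,n/v$ and hypotenuse $iu+jv=n$. The hypothesis that $u+v$ and $1/u+1/v$ are rational forces $uv=(u+v)/(1/u+1/v)$ to be rational as well, so $u$ and $v$ are the two roots of a quadratic with rational coefficients whose discriminant is a non-square (since $u/v$ is irrational); that is, $u,v$ are conjugate quadratic irrationalities. The eventual periodicity of the continued fraction of $u/v$ that this entails is the structural feature I would exploit throughout. I would also record at the outset that, under the stated hypothesis, Theorem~\ref{thm:irrationalperiodcollapse} shows $f$ is not a quasipolynomial (this is exactly how primitivity is arranged), so the entire content is to \emph{upgrade} ``not a quasipolynomial'' to ``not $\mc{P}$-recursive.''

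First I would reduce to the oscillatory part. Expanding $f(n)=\sum_{i=0}^{\floor{n/u}}\big(\floor{(n-iu)/v}+1\big)$ and replacing each floor by its value minus a sawtooth yields $f(n)=p(n)+E(n)$, where $p(n)=\frac{1}{2uv}n^2+\frac12\big(\tfrac1u+\tfrac1v\big)n+c$ has \emph{rational} coefficients and $E(n)=-\sum_{i=0}^{\floor{n/u}}\big(\!\big(\tfrac{n-iu}{v}\big)\!\big)+(\text{bounded terms in }\{n/u\})$ is a sawtooth sum. Since $\mc{P}$-recursive sequences over $\Q$ are closed under subtracting polynomial sequences, $f$ is $\mc{P}$-recursive if and only if $E$ is. Because $u/v$ is a quadratic irrational, classical discrepancy estimates (Ostrowski, the three-distance theorem) give $E(n)=O(\log n)$, while the failure of pseudo-rationality guarantees that this oscillation does not degenerate to a quasipolynomial. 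Thus $E$ is a genuinely oscillating, logarithmically bounded, non-periodic remainder — and, crucially, logarithmic amplitude alone is \emph{not} an obstruction (the harmonic numbers are $\mc{P}$-recursive with $H_n\sim\log n$), so the contradiction must come from the fine structure of $E$, not its crude size.

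The contradiction should then come from the tension between the rigidity of $\mc{P}$-recursion and the self-similar nature of $E$. A $\mc{P}$-recursive sequence has a D-finite generating function, and a D-finite power series has only finitely many singularities and continues past its circle of convergence along all but finitely many directions; equivalently its coefficients admit a Birkhoff--Trjitzinsky asymptotic normal form assembled from finitely many \emph{smooth} modes $e^{q(n^{1/r})}n^{\theta}(\log n)^{k}$. The sawtooth sum $E$, by contrast, inherits from the purely periodic continued fraction of $u/v$ an exact renormalization: the fundamental unit $\epsilon$ of the associated quadratic order rescales the lattice-point problem and relates the profile of $E$ near scale $n$ to its profile near scale $n/\epsilon$. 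This self-similarity under an irrational scale is reminiscent of a Mahler functional equation and should force the unit circle to be a natural boundary of $\sum_n E(n)x^n$, contradicting D-finiteness and hence the $\mc{P}$-recursiveness of $f$.

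The main obstacle is precisely this last step: converting the heuristic self-similarity of the quadratic-irrational sawtooth sum into a verifiable violation of a necessary condition for $\mc{P}$-recursiveness. Concretely I would need either (a) a clean proof that $\sum_n E(n)x^n$ has a natural boundary, by exhibiting singularities that accumulate densely on $|x|=1$ through the renormalization by $\epsilon$, or (b) a precise asymptotic statement along a well-chosen subsequence (for instance the convergent denominators of $u/v$) whose self-similar, non-smooth profile cannot be matched by any finite Birkhoff--Trjitzinsky normal form. Either route demands uniform control of the error terms in the sawtooth asymptotics and a guarantee that the oscillation does not accidentally cancel; here the non-pseudo-rational hypothesis, via Theorem~\ref{thm:irrationalperiodcollapse}, is exactly what certifies that a genuine, non-cancelling oscillation is present to be exploited.
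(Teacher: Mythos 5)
Your reductions in the first two paragraphs are sound: rationality of $u+v$ and $1/u+1/v$ does force $uv$ to be rational, hence $u,v$ are conjugate quadratic irrationals; $\mathcal{P}$-recursive sequences are closed under subtracting polynomial sequences; and for badly approximable $u/v$ the sawtooth sum $E(n)$ is indeed $O(\log n)$ by Koksma's inequality. But the theorem is proved, if at all, in your third paragraph, and there you offer only a heuristic. The two proposed mechanisms --- (a) renormalization by the fundamental unit forces $\sum_n E(n)x^n$ to have the unit circle as a natural boundary, or (b) no finite Birkhoff--Trjitzinsky normal form can match the self-similar profile of $E$ along convergent denominators --- are each stated as hopes (``should force,'' ``reminiscent of''), not established, and you acknowledge this yourself (``the main obstacle is precisely this last step''). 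That step is where the entire difficulty lives: as you correctly note, logarithmic size together with failure of quasipolynomiality is perfectly compatible with $\mathcal{P}$-recursiveness (harmonic numbers), so until (a) or (b) is actually proved, nothing beyond the failure of pseudo-rationality has been shown. Neither is a routine citation: natural-boundary theorems for generating functions of sawtooth sums with quadratic-irrational rotation number are not available in usable form, and the Birkhoff--Trjitzinsky route would need uniform second-order asymptotics for $E$ that you never derive. There is also a circularity hazard: you invoke Theorem~\ref{thm:irrationalperiodcollapse} to certify that the oscillation is ``genuine,'' but the relevant (``only if'') direction of that theorem is itself proved in the paper as a consequence of the present statement, via Claim~\ref{clm:importantclaim}; a completed write-up along your lines must either reprove that direction independently or avoid using it.

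For contrast, the paper's proof avoids analytic and singularity-theoretic machinery entirely. It writes $I_{\mathcal{T}_{u,v}}(t)=\op{err}(t)+q(t)$, where $q$ is an honest quasipolynomial (using only that $\tilde{\alpha}=u+v$ and $\tilde{\beta}=1/u+1/v$ are rational) and $\op{err}(t)$ counts the indices $m$ with $\lbrace\tilde{\beta}(t-m\tilde{\alpha})\rbrace>\lbrace(t-m\tilde{\alpha})/u\rbrace$. Assuming a recurrence \eqref{eqn:recursiondefn} and differencing it over one period isolates the bounded function $\op{head}(t)=\op{err}(t)-\op{err}(t-p)$; equidistribution of $\lbrace t/u\rbrace$ along an arithmetic progression $S$ (Weyl's criterion, the same input behind Lemma~\ref{lem:poonenlemma}) then yields two dense families of $t$ in $S$ on which $\op{head}$ takes two different constant values, according to whether $\lbrace t/u\rbrace$ sits just above or just below a threshold $a_0$. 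Fed back into the recurrence, this forces a single rational function to have two distinct horizontal asymptotes at $+\infty$, which is impossible. If you want to salvage your program, the realistic move is not to chase natural boundaries but to prove precisely this sort of two-valued asymptotic dichotomy for a differenced version of your $E$ --- which is, in effect, what the paper's argument does.
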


To introduce our final set of results, note that since the Ehrhart
functions for pseudo-rational $\mathcal{T}_{u,v}$ are
quasipolynomials, one can ask to what degree some of the basic results
from Ehrhart theory in the rational case apply.  In fact, versions of
Ehrhart-Macdonald reciprocity, as well as the nonnegativity theorem and
monotonicity theorem of the third author, hold for these triangles; see
Proposition~\ref{prop:properties}.

Although our primary interest here is for triangles, we can also give
examples of irrational polytopes with quasipolynomial Ehrhart
functions in any dimension; see Example~\ref{ex:1dex},
Example~\ref{ex:ndex}, and Example~\ref{ex:3dex}.

\subsection{Criteria for period collapse for rational triangles}

When $(u,v)$ are rational, the period collapse question for
$\mathcal{T}_{u,v}$ is less well understood than in the irrational
case.  Nevertheless, we find many new examples of rational triangles
of this form exhibiting significant period collapse.  The key is the
following criterion.

\begin{theorem}
\label{thm:main theorem-1}
Let $u=q/p$ and $v=s/r$ in lowest terms.  Then $q$ is a period of the
Ehrhart quasipolynomial for $\mathcal{T}_{u,v}$ if
 \begin{align}\label{eqn:collapse-condition-1}
 s | p,\ \ p|(rq+1),\ \   {\rm and}\ \ \
  \gcd\left(\frac{rq+1}{p},s\right)=1.
 \end{align}
\end{theorem}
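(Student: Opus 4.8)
The plan is to count lattice points column by column and reduce everything to a single floor-sum whose discrete $q$-difference I can evaluate exactly. After scaling, $t\mathcal{T}_{u,v}$ is the set $\{(x,y):x,y\ge 0,\ qr\,x+ps\,y\le tpr\}$, so summing over the admissible values of $x$ gives
\[
I_{\mathcal{T}_{u,v}}(t)\;=\;\sum_{x=0}^{\floor{tp/q}}\Big(\floor{\tfrac{r(tp-qx)}{ps}}+1\Big)
\;=\;\Big(\floor{\tfrac{tp}{q}}+1\Big)+S(t),
\]
where $S(t)\eqdef\sum_{x=0}^{\floor{tp/q}}\floor{r(tp-qx)/(ps)}$. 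Since $\gcd(p,q)=1$, the term $\floor{tp/q}+1=\tfrac{tp-(tp\bmod q)}{q}+1$ is already a period-$q$ quasipolynomial of degree $1$, so it suffices to prove that $S$ is a quasipolynomial of period dividing $q$.

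For $S$ I would reindex by $j=tp-qx$, so that $S(t)=\sum_{i=0}^{n}\floor{r(\kappa+qi)/(ps)}$ with $n=\floor{tp/q}$ and $\kappa=tp\bmod q$, and then study the discrete difference $S(t+q)-S(t)$. Replacing $t$ by $t+q$ leaves $\kappa$ unchanged and raises $n$ by $p$, so only $p$ new top terms survive; using $qn+\kappa=tp$ these simplify to
\[
S(t+q)-S(t)\;=\;\sum_{l=1}^{p}\floor{\tfrac{rt}{s}+\tfrac{rql}{ps}}.
\]
Here the three hypotheses enter. Writing $p=sm$ and $rq=p\ell-1$ with $\ell=(rq+1)/p$, each summand becomes $\floor{(rt+\ell l)/s-l/(ps)}$, and since the correction $-l/(ps)$ lies in $(-1/s,0]$ it lowers the floor by exactly $1$ precisely when $s\mid(rt+\ell l)$, yielding $\sum_{l=1}^p\floor{(rt+\ell l)/s}-\#\{l\in[1,p]:\ell l\equiv -rt \ (\mathrm{mod}\ s)\}$.

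The heart of the argument is that $\gcd(\ell,s)=1$ together with $p=sm$ makes both remaining pieces collapse to constants in $t$. The divisibility count is exactly $m=p/s$ (one residue in each of the $m$ complete residue systems inside $[1,p]$), independent of $t$; and, summing $(rt+\ell l)-\big((rt+\ell l)\bmod s\big)$ over $l$, the residues $\ell l\bmod s$ equidistribute over each block of $s$ consecutive $l$, so that $\sum_l\big((rt+\ell l)\bmod s\big)=m\,\tfrac{s(s-1)}{2}$ is also constant in $t$. Hence $S(t+q)-S(t)=mr\,t+C$ is an affine function of $t$, so on each residue class modulo $q$ the sequence $S$ telescopes to a degree-$2$ polynomial, which shows that $S$, and therefore $I_{\mathcal{T}_{u,v}}$, has period dividing $q$. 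The main obstacle is exactly this evaluation of the floor-sum: controlling the effect of the $-l/(ps)$ correction on the floor, and verifying that the three conditions $s\mid p$, $p\mid(rq+1)$, and $\gcd\big((rq+1)/p,\,s\big)=1$ are precisely what force the $t$-dependence to degenerate into the single linear term $mr\,t$—each hypothesis is used once, and dropping any one reintroduces genuine $t$-dependence into the periodic part.
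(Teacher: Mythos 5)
Your argument is correct, but it takes a genuinely different route from the paper's. The paper does not count lattice points directly: it invokes the explicit Beck--Robins formula for $I_{\mathcal{T}_{q/p,s/r}}(t)$ in terms of Fourier--Dedekind sums over roots of unity, observes that after simplification the only term whose $t$-dependence could have period exceeding $q$ is
\[
\frac{1}{ps}\sum_{l=1}^{ps-1}\frac{\xi_{s}^{-ltr}}{(1-\xi_{ps}^{l\cdot rq})(1-\xi_{ps}^{l})},
\]
and then kills the $t$-dependence of this term by splitting $l=is+u$ and applying the vanishing lemma of \cite{Gardiner-Kleinman}, the hypotheses \eqref{eqn:collapse-condition-1} entering through $ps\nmid u(rq+1)$ and $ps\mid u(rq+1)p$. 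You instead argue completely elementarily: column-by-column counting, an exact evaluation of the step difference $S(t+q)-S(t)$, and telescoping; I checked the reindexing, the floor-correction analysis, the residue count $m=p/s$, and the equidistribution of $\ell l \bmod s$, and all are sound. Your method is self-contained (no roots of unity, no external lemma), makes transparent exactly where each of the three hypotheses is used, and is in fact closer in spirit to the floor-sum manipulations the paper itself uses in Step 1 of the proof of Theorem~\ref{thm:irrationalperiodcollapse} for the irrational case. What the paper's heavier machinery buys in exchange is the explicit formula \eqref{equation-1}, which is reused verbatim to prove the converse direction in Theorem~\ref{thm2}: there one must show that if $q$ \emph{is} a period then the divisibility conditions hold, and that argument runs the root-of-unity computation backwards via linear independence of the characters $\xi_p^{jt}$ --- something your difference argument, which only establishes sufficiency, would not yield by itself. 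One trivial slip: for $1\le l\le p$ the correction $-l/(ps)$ lies in $[-1/s,0)$, not $(-1/s,0]$ (it equals $-1/s$ at $l=p$ and is never $0$); your conclusion that the floor drops by exactly $1$ precisely when $s\mid(rt+\ell l)$ is unaffected.
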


For example, if $q=1$, then one obtains the McAllister and Woods
example of period collapse mentioned above as a corollary of
Theorem~\ref{thm:main theorem-1}.  Indeed, the theorem implies that
the triangle with vertices $(0,0)$, $(p,0)$ and
$\left(0,\frac{p-1}{p}\right)$ is a pseudo-integral triangle with
denominator $p$. This triangle is unimodularly equivalent to the
pseudo-integral triangle found by McAllister and Woods \cite[Theorem
  2.2]{McAllister-Woods2005}, which has vertices $(0,0)$, $(p,0)$ and
$\left(1,\frac{p-1}{p}\right)$, via the map
\begin{align*}
\varphi(x)=x\left(
              \begin{array}{cc}
                -1 & 0 \\
                -p & 1 \\
              \end{array}
            \right)+(p,0).
\end{align*}

Theorem~\ref{thm:main theorem-1} can be used to construct other
pseudo-integral triangles, via the following result.

\begin{corollary}\label{pseudo-Corollary}
Let $u=q/p, v=s/r$ in lowest terms.  The triangle $\mathcal{T}_{u,v}$
is pseudo-integral if
 \begin{align}\label{pseudo-condition-1}
 s|p,\ \ p|(rq+1),\ \ \gcd\left(\frac{rq+1}{p},s\right)=1
 \end{align}
 and\
 \begin{align}\label{pseudo-condition-2}
 q|r,\ \ r|(sp+1),\ \ \gcd\left(\frac{sp+1}{r},q\right)=1.
 \end{align}
\end{corollary}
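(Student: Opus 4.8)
The plan is to exploit the reflection symmetry of the family $\mathcal{T}_{u,v}$ together with Theorem~\ref{thm:main theorem-1}, applied twice. The conditions \eqref{pseudo-condition-1} and \eqref{pseudo-condition-2} are visibly the same up to swapping the roles of $(q,p)$ and $(s,r)$, which geometrically corresponds to swapping the two legs of the triangle, so the natural strategy is to produce two periods and show that their gcd forces the period down to $1$.

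First I would observe that the linear map $(x,y) \mapsto (y,x)$ lies in $GL_2(\mathbb{Z})$ and carries $\mathcal{T}_{u,v}$ onto $\mathcal{T}_{v,u}$: it sends the vertices $(0,0), (1/u,0), (0,1/v)$ to $(0,0), (0,1/u), (1/v,0)$. Since this map preserves the lattice $\mathbb{Z}^2$, it gives $I_{\mathcal{T}_{u,v}}(t) = I_{\mathcal{T}_{v,u}}(t)$ for every $t$, so $\mathcal{T}_{u,v}$ and $\mathcal{T}_{v,u}$ have identical Ehrhart quasipolynomials, and in particular the same period.

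Next I would apply Theorem~\ref{thm:main theorem-1} to each triangle. Condition \eqref{pseudo-condition-1} is exactly \eqref{eqn:collapse-condition-1} for $\mathcal{T}_{u,v}$, so the theorem shows that $q$ is a period. For $\mathcal{T}_{v,u}$ I would write its legs in lowest terms as $v = s/r$ and $u = q/p$, so that the roles of $(q,p)$ and $(s,r)$ in Theorem~\ref{thm:main theorem-1} are now played by $(s,r)$ and $(q,p)$; a careful substitution shows that \eqref{eqn:collapse-condition-1} for $\mathcal{T}_{v,u}$ reads $q \mid r$, $r \mid (sp+1)$, $\gcd\bigl((sp+1)/r, q\bigr) = 1$, which is precisely \eqref{pseudo-condition-2}. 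Hence the theorem shows that $s$ is a period of $\mathcal{T}_{v,u}$, and by the symmetry above $s$ is also a period of $\mathcal{T}_{u,v}$.

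At this point both $q$ and $s$ are periods of the Ehrhart quasipolynomial of $\mathcal{T}_{u,v}$. Since the minimal period of a quasipolynomial divides every period, the period divides $\gcd(q,s)$. The final step, which is the only content beyond bookkeeping, is to show $\gcd(q,s) = 1$: from $s \mid p$ and $p \mid (rq+1)$ we get $s \mid (rq+1)$, so any common divisor of $q$ and $s$ divides both $rq$ and $rq+1$, hence divides $1$. Therefore the period divides $1$, i.e.\ equals $1$, so $\mathcal{T}_{u,v}$ is pseudo-integral. I expect the only place requiring care is the index bookkeeping when applying Theorem~\ref{thm:main theorem-1} to $\mathcal{T}_{v,u}$ — verifying that the lowest-terms and gcd conditions transform correctly under the swap — rather than any genuinely hard estimate.
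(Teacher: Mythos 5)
Your proposal is correct and takes essentially the same route as the paper's own proof: apply Theorem~\ref{thm:main theorem-1} to $\mathcal{T}_{u,v}$ to get $q$ as a period, use the lattice-preserving swap $(x,y)\mapsto(y,x)$ to transfer the theorem (under condition \eqref{pseudo-condition-2}) into $s$ being a period as well, and conclude from $\gcd(q,s)=1$ that the period is $1$. The only difference is cosmetic: you spell out why $\gcd(q,s)=1$ (via $s\mid p$, $p\mid(rq+1)$), a point the paper dismisses with ``clearly.''
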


The criteria of \eqref{eqn:collapse-condition-1} also have a nice
relationship with the $k$-Fibonacci numbers.  Specifically, we can use
Theorem~\ref{thm:main theorem-1} to construct triangles with period dividing
any even-index $k$-Fibonacci number, and high denominator
(Theorem~\ref{period-collapse-3}).  If $s=p$ and $r=q$, then the
condition \eqref{eqn:collapse-condition-1} is also sufficient for $q$
to be a period, which we also show (Theorem~\ref{thm2}).

\subsection{Relationship with symplectic geometry}

We briefly remark that the triangles $\mathcal{T}_{u,v}$ with $u/v$
irrational satisfying \eqref{eqn:keyequation} seem to have interesting
relationships with symplectic geometry.  For example, the triangle
from \eqref{ex:smallesttriangle} is closely related to a foundational
result of McDuff and Schlenk \cite{ms} about ``symplectic" embedding
problems; see also \cite{Gardiner-Kleinman}.  Some of the other
triangles from \eqref{eqn:keyequation} also seem to be relevant in the
context of symplectic embeddings.  This is further explored in
\cite{hm}.

\subsection{Acknowledgements}

We thank Bjorn Poonen for his help with Lemma~\ref{lem:poonenlemma}.
The first author would also like to thank Martin Gardiner for helpful
discussions. The first author was partially supported by NSF grant DMS-1402200. The second author was
supported by the Research Fund
for the Doctoral Program of Higher Education of China (Grant No. 20130182120030), the Fundamental
Research Funds for Central Universities (Grant No. XDJK2013C133) and the China
Scholarship Council. The third author was partially supported by NSF grant
DMS-1068625.

\section{Irrational triangles with Ehrhart quasipolynomials}

\subsection{Proof of the main theorem}
\label{sec:mainthm}

Here we prove Theorem~\ref{thm:irrationalperiodcollapse} in several steps.

\begin{proof}

{\em Step 1.}  We first prove the ``if" direction of the first bullet point.

To start, we want to show that the number of nonnegative integer
solutions $(x,y)$ to
\begin{equation}
\label{eqn:latticepointcount}
ux + vy \le t
\end{equation}
is a quasipolynomial in $t$ for nonnegative integer $t$.  Let $0 \le m
\le \lfloor t/\alpha \rfloor$ be an integer.  By
\eqref{eqn:keyequation}, the number of solutions to
\eqref{eqn:latticepointcount} with $x=m$ and $y \ge m$ is $1 + \lfloor
\frac{t - m\alpha}{v} \rfloor$.  Similarly, the number of solutions to
\eqref{eqn:latticepointcount} with $y = m$ and $x > m$ is $\lfloor
\frac{t-m\alpha}{u} \rfloor$.  It follows that the number of solutions
to \eqref{eqn:latticepointcount} is
\begin{equation}
\label{eqn:sum}
\sum^{\lfloor \frac{t}{\alpha} \rfloor}_{m = 0}\left( 1+\left\lfloor
\frac{t-m\alpha}{v}\right\rfloor + \left\lfloor \frac{t-m\alpha}{u}
\right\rfloor\right).
\end{equation}

By \eqref{eqn:keyequation}, we know that
\begin{align*}
\left\lfloor \frac{t-m\alpha}{v} \right\rfloor &= \lfloor
(t-m\alpha)(\beta-1/u) \rfloor \\
& = \left\lfloor (t\beta-m\alpha\beta)-\frac{(t-m\alpha)}{u}
\right\rfloor \\
& = (t\beta-m\alpha\beta)+\left\lfloor -\frac{(t - m\alpha)}{u}
\right\rfloor.
\end{align*}

We can therefore rewrite \eqref{eqn:sum} as
\begin{equation}
\label{eqn:simplerform}
\sigma(t) + \sum_{m = 0}^{\lfloor \frac{t}{\alpha} \rfloor}
(t\beta-m\alpha\beta),
\end{equation}
where
\begin{flalign*}
\sigma(t) \eqdef \left\{\begin{aligned}1 & \qquad\textrm{if }t\textrm{ is divisible by $\alpha$,} \\
0 & \qquad\textrm{otherwise}.
\end{aligned}
\right.
\end{flalign*}

We now claim that \eqref{eqn:simplerform} is a quasipolynomial in $t$,
of period dividing $\alpha$.   We can rewrite \eqref{eqn:simplerform}
as
\begin{equation}
\label{eqn:simplerequation}
\sigma(t) + (\lfloor t/ \alpha \rfloor + 1)\left(t\beta -
\frac{\alpha\beta \lfloor t/\alpha \rfloor}{2}\right).
\end{equation}

Let $z \eqdef t$ (mod $\alpha$).  Then $\lfloor \frac{t}{\alpha}
\rfloor=\frac{t-z}{\alpha}.$  Hence we can rewrite
\eqref{eqn:simplerequation} as
\[\sigma(t) + \frac{(t-(z-\alpha))(t\beta+z\beta)}{2\alpha}\]
\begin{equation}
\label{eqn:explicitform}
 = \frac{t^2\beta+t\alpha\beta+z\beta(\alpha-z)+2\alpha\sigma(t)}{2\alpha}.
\end{equation}

The coefficients of $t^2$ and $t$ in \eqref{eqn:explicitform} do not
depend on $t$, and the constant term only depends on the equivalence
class of $t$ modulo $\alpha$.  This proves the pseudo-rational part of
the ``if" direction of the first bullet point.

{\em Step 2.}  We now begin the proof of the ``only if" direction (we
will return to the rest of the proof of the ``if direction"" in a
later step).  Let $(u,v)$ be any pair of positive numbers with $v/u$
irrational.

We will use the following, which was explained to us by Bjorn Poonen:

\begin{lemma}
\label{lem:poonenlemma}
If $u/v$ is irrational, then $I_{\mathcal{T}_{u,v}}(t)=
\frac{1}{2uv}t^2 +\frac{1}{2}\left(\frac 1u+\frac 1v\right)t+o(t)$ for
$t \in \mathbb{R}_{> 0}.$
\end{lemma}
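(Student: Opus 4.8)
The plan is to count the lattice points directly and isolate the error term via equidistribution. Geometrically the two claimed main terms are the area $\frac{t^2}{2uv}$ of the dilated triangle and half of its axis-boundary, $\frac12(\frac1u+\frac1v)t$, with the irrational slant edge contributing nothing to order $t$; the summation below makes this precise. Writing $f(t)=I_{\mathcal{T}_{u,v}}(t)$, for each integer $x$ with $0\le x\le\floor{t/u}$ the number of nonnegative integers $y$ with $vy\le t-ux$ is $\floor{(t-ux)/v}+1$, so
\[
f(t)=\sum_{x=0}^{\floor{t/u}}\left(\floor{\frac{t-ux}{v}}+1\right).
\]
First I would write $\floor{w}=w-\{w\}$ and split this into a \emph{smooth} sum $\sum_x\big((t-ux)/v+1\big)$ minus a sum of fractional parts $\sum_x\{(t-ux)/v\}$. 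The point of the split is that the smooth sum is an exactly computable arithmetic-progression sum carrying the quadratic term, whereas the fractional-part sum is exactly where the irrationality of $u/v$ enters and must be treated asymptotically.

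For the smooth part, writing $M=\floor{t/u}=t/u-\{t/u\}$ and using $\sum_{x=0}^{M}x=M(M+1)/2$, a direct computation gives
\[
\sum_{x=0}^{M}\left(\frac{t-ux}{v}+1\right)=\frac{t^2}{2uv}+\frac{t}{2v}+\frac{t}{u}+O(1),
\]
where the $O(1)$ collects terms bounded in $\{t/u\}$. I would not dwell on this, as it is routine. Note that the linear coefficient appearing here, $\frac{1}{2v}+\frac1u$, is \emph{not} the claimed $\frac12(\frac1u+\frac1v)$; the missing $-\frac{t}{2u}$ must be supplied by the fractional-part sum.

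The crux is therefore the fractional-part sum. Setting $\beta=t/v$, we have $\{(t-ux)/v\}=\{\beta-(u/v)x\}$, and since $u/v$ is irrational the sequence $x\mapsto\{\beta-(u/v)x\}$ for $x=0,\dots,M$ is equidistributed modulo $1$ by Weyl's criterion, so its average tends to $\int_0^1 w\,dw=1/2$, giving
\[
\sum_{x=0}^{M}\left\{\frac{t-ux}{v}\right\}=\frac{M+1}{2}+o(M)=\frac{t}{2u}+o(t).
\]
Subtracting this from the smooth part leaves precisely $\frac{t^2}{2uv}+\frac{t}{2}(\frac1u+\frac1v)+o(t)$, as claimed. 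The main obstacle is making the $o(t)$ genuinely uniform: as $t$ grows both the number of terms $M$ and the shift $\beta=t/v$ vary, so I cannot invoke equidistribution for a single fixed sequence. I would resolve this using the fact that the extreme discrepancy of $\{-(u/v)x+\beta\}_{x=0}^{M}$ is, up to a factor of $2$, independent of the shift $\beta$ and equal to that of $\{-(u/v)x\}_{x=0}^{M}$, which tends to $0$ as $M\to\infty$. An application of Koksma's inequality (the sawtooth $w\mapsto\{w\}$ has bounded variation on $[0,1)$) then bounds the deviation of $\sum_{x=0}^M\{\beta-(u/v)x\}$ from $(M+1)/2$ by $(M+1)$ times this discrepancy, uniformly in $\beta$, and hence by $o(M)=o(t)$.
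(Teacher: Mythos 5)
Your proof is correct and follows essentially the same route as the paper's: count lattice points along vertical lines, split the floor function into its linear part and a bounded periodic part, evaluate the smooth sum exactly (the paper does this geometrically via trapezoid areas, you do it via the arithmetic-progression formula), and handle the remaining sum by equidistribution of $\{x\,u/v\}$ modulo $1$. If anything, your write-up is more careful than the paper's, which simply invokes Weyl's criterion for a family of points that changes with $t$; your appeal to the shift-invariance of discrepancy together with Koksma's inequality makes the required uniformity in the shift $t/v$ explicit.
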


\begin{proof}

By scaling, we may assume that $u=1$.  By counting in each vertical
line, we see that
\begin{equation}
\label{eqn:eqn1}
I_{\mathcal{T}_{1,1/v}}(t) = \sum_{m=0}^{\lfloor t \rfloor}(\lfloor
t/v-m/v \rfloor+1).
\end{equation}
It is convenient to define a function $f(x)$ by
\[ \lfloor x \rfloor + 1 = x + f(x).\]
We can then rewrite \eqref{eqn:eqn1} as
\begin{equation}
\label{eqn:eqn2}
I_{\mathcal{T}_{1,v}}(t)=\sum_{m=0}^{\lfloor t
  \rfloor}(t/v-m/v)+\sum_{m=0}^{\lfloor t \rfloor} f(t/v-m/v).
\end{equation}

Now note that the $m$th term in the first sum computes the area of
the trapezoid defined by $m - 1/2 \le x \le m + 1/2, 0 \le y \le
t/v-x/v.$   The first sum is therefore within $O(1)$ of the area of
the triangle defined by $-1/2 \le x \le t, 0 \le y \le t/v-x/v,$ and
so we have
\begin{equation}
\label{eqn:firstsum}
\sum_{m=0}^{\lfloor t \rfloor}(t/v-m/v)=(t^2+t)/2v+O(1).
\end{equation}

The second sum is a sum of values of a bounded, integrable, periodic
function $f$ at points that become equidistributed mod $1$ as $t \to
\infty$ (by Weyl's criterion for uniform distribution), and so we find
that
\begin{equation}
\label{eqn:secondsum}
\sum_{m=0}^{\lfloor t \rfloor} f(t/v-m/v)=t \int_0^1 f(x)dx + o(t) =
t/2+o(t).
\end{equation}

Lemma~\ref{lem:poonenlemma} now follows by combining \eqref{eqn:eqn2},
\eqref{eqn:firstsum}, and \eqref{eqn:secondsum}.

\end{proof}

{\em Step 3.} Now assume that $I_{\mathcal{T}_{u,v}}(t)$ is a
quasipolynomial in the positive integer $t$, of period $C$.  Write
$I_{\mathcal{T}_{u,v}}(pt)=A(C^2t^2)+B(Ct)+D.$  We know that the
number  $I_{\mathcal{T}_{u,v}}(t)$ must always be an integer.  Hence,
the numbers $A$ and $B$ here must be rational.
Lemma~\ref{lem:poonenlemma} now implies that $u+v$ and $1/u+1/v$ must
be rational as well.

So, if $\mathcal{T}_{u,v}$ is pseudo-rational, we know that $u+v$ and
$1/u+1/v$ must be rational; write $u+v = \tilde{\alpha}, 1/u+1/v =
\tilde{\beta}.$  The ``only if" direction will then follow from the
following claim, which is closely related to
Theorem~\ref{thm:precursion}:

\begin{claim}
\label{clm:importantclaim}
Unless $\tilde{\beta}$ and $\tilde{\alpha}\tilde{\beta}$ are both
integers, the Ehrhart function of $\mathcal{T}_{u,v}$ is not
$\mathcal{P}$-recursive.
\end{claim}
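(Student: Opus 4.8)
The plan is to produce an exact formula for $I_{\mathcal{T}_{u,v}}(n)$ that isolates a sum of fractional parts, and then to show that this fractional‑part sum forces a generating function with infinitely many boundary singularities unless $\tilde\beta,\tilde\alpha\tilde\beta\in\Z$. The diagonal count leading to \eqref{eqn:sum} is valid verbatim with $\alpha$ replaced by the rational number $\tilde\alpha$; so, writing $N=\lfloor n/\tilde\alpha\rfloor$, letting $\{\cdot\}$ denote fractional part, setting $g(x)=\{x/u\}+\{x/v\}$, and using $1/u+1/v=\tilde\beta$, I obtain
\[
  I_{\mathcal{T}_{u,v}}(n)=\sum_{m=0}^{N}\bigl(1+(n-m\tilde\alpha)\tilde\beta\bigr)-S(n),
  \qquad S(n)\eqdef\sum_{m=0}^{N}g(n-m\tilde\alpha).
\]
Since $\tilde\alpha,\tilde\beta\in\Q$, the first sum is a quasipolynomial in $n$, hence $\mathcal{P}$‑recursive; because $\mathcal{P}$‑recursive sequences are closed under addition, it suffices to prove that $S(n)$ is not $\mathcal{P}$‑recursive.

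The obstruction to analyzing $S$ directly is its $n$‑dependent summation range, which I would remove by differencing. Writing $\tilde\alpha=A/B$ in lowest terms (so $A=B\tilde\alpha\in\Z$), a telescoping reindexation of the shift $n\mapsto n+A$ yields the clean identity
\[
  S(n+A)-S(n)=\sum_{j=1}^{B}g(n+j\tilde\alpha).
\]
If $S$ were $\mathcal{P}$‑recursive then so would be this difference, since $\mathcal{P}$‑recursiveness is preserved under index‑shift and subtraction. Thus it is enough to show that the \emph{bounded} sequence $h(n)\eqdef\sum_{j=1}^{B}g(n+j\tilde\alpha)$ is not $\mathcal{P}$‑recursive when $\tilde\beta,\tilde\alpha\tilde\beta$ are not both integers.

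Now I would expand $g$ by the sawtooth series $\{y\}=\tfrac12-\sum_{\ell\neq0}\frac{e^{2\pi i\ell y}}{2\pi i\ell}$, which is valid at every relevant point because $(n+j\tilde\alpha)/u$ and $(n+j\tilde\alpha)/v$ are never integers (a rational cannot be a nonzero integer multiple of the irrational $u$ or $v$). The sequence $h(n)$ then becomes almost periodic with frequencies only $0$ and the irrationals $\ell/u,\ell/v\pmod 1$, and I compute its Fourier–Bohr coefficient at frequency $\ell/u$: the $u$‑mode of index $\ell$ always contributes, while a $v$‑mode contributes precisely when $\ell\tilde\beta\in\Z$ (and then it is the index $-\ell$ mode, since $1/v\equiv\tilde\beta-1/u$). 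Using $1/v=\tilde\beta-1/u$ this collapses to
\[
  c(\ell/u)=\frac{-1}{2\pi i\ell}\sum_{j=1}^{B}e^{2\pi i\ell j\tilde\alpha/u}
  \bigl(1-\mathbf{1}[\ell\tilde\beta\in\Z]\,e^{-2\pi i\ell j\tilde\alpha\tilde\beta}\bigr).
\]
If both $\tilde\beta,\tilde\alpha\tilde\beta\in\Z$ every such coefficient vanishes (consistent with $h\equiv B$); if integrality fails I claim $c(\ell/u)\neq0$ for infinitely many $\ell$. Indeed, if $\tilde\beta\notin\Z$ then for the infinitely many $\ell$ with $\ell\tilde\beta\notin\Z$ the coefficient is $\frac{-1}{2\pi i\ell}\sum_{j=1}^{B}w^{j}$ with $w=e^{2\pi i\ell\tilde\alpha/u}\neq1$ not a root of unity (as $\tilde\alpha/u$ is irrational), so the geometric sum never vanishes; if instead $\tilde\beta\in\Z$ but $\tilde\alpha\tilde\beta\notin\Z$, I restrict $\ell$ to the residue class $\ell\equiv1\pmod Q$, $Q$ the denominator of $\tilde\alpha\tilde\beta$, which freezes $\zeta=e^{-2\pi i\ell\tilde\alpha\tilde\beta}\neq1$ and reduces $c(\ell/u)$ to $\frac{-1}{2\pi i\ell}F(w)$ with $F(w)=\sum_{j=1}^{B}(1-\zeta^{j})w^{j}$ a fixed nonzero polynomial; as $w$ ranges over the infinitely many distinct values $e^{2\pi i\ell\tilde\alpha/u}$, the equation $F(w)=0$ holds for only finitely many of them.

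Finally, a nonzero Fourier–Bohr coefficient at an isolated irrational frequency forces a genuine singularity of $\sum_n h(n)q^n$ at the boundary point $e^{\mp2\pi i\ell/u}$: by the Abelian theorem the Cesàro mean $c(\ell/u)\neq0$ gives $(1-r)\sum_n h(n)\bigl(re^{\mp2\pi i\ell/u}\bigr)^{n}\to c(\ell/u)$, so the series is unbounded there. Infinitely many such singular points — which, being of the form $e^{2\pi i\cdot(\text{quadratic irrational})}$, are in fact transcendental by Gelfond–Schneider — are incompatible with $D$‑finiteness, since a $D$‑finite function has only finitely many singularities, all among the algebraic zeros of the leading coefficient of its defining ODE. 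Hence $\sum_n h(n)q^n$ is not $D$‑finite, $h$ is not $\mathcal{P}$‑recursive, and tracing back neither is $I_{\mathcal{T}_{u,v}}$. I expect the main obstacle to be exactly the cancellation bookkeeping of the previous paragraph — checking that the $u$‑ and $v$‑modes cannot conspire to cancel for all large $\ell$ — together with making rigorous the passage from a nonvanishing Fourier–Bohr coefficient to a non‑removable singularity of the generating function.
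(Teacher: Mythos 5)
Your proposal is correct in outline and takes a genuinely different route from the paper's proof. Both arguments start from the same diagonal count \eqref{eqn:sum} and both ultimately rest on equidistribution of $n/u$ modulo $1$, but they diverge immediately afterwards. The paper never leaves the recurrence: it isolates the integer-valued error term $\op{err}(t)$ (a discretized cousin of your $S(n)$), differences it to obtain the bounded window-count $\op{head}(t)$, and then feeds the density of $\lbrace t/u \rbrace$ into the putative recurrence so that the rational function $a(t)/p_{s-\tilde{s}}(t)$ would need two distinct horizontal asymptotes --- an elementary but delicate bookkeeping contradiction (Steps 3--6). You instead subtract the quasipolynomial part exactly, difference by $A$ to get the bounded almost-periodic sequence $h(n)=\sum_{j=1}^{B}g(n+j\tilde{\alpha})$ (your telescoping identity is right, since $\lfloor (n+A)/\tilde{\alpha}\rfloor = \lfloor n/\tilde{\alpha}\rfloor + B$), and then argue through generating functions: nonvanishing Fourier--Bohr coefficients at infinitely many distinct irrational frequencies give, via the Frobenius--Abel theorem, infinitely many unit-circle singularities of $\sum_n h(n)q^n$, contradicting D-finiteness. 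Your frequency bookkeeping checks out: a $v$-mode can land on the frequency $\ell/u$ only as the index $-\ell$ mode and only when $\ell\tilde{\beta}\in\Z$ (a rational minus an irrational is never an integer), and your two cases --- $\tilde{\beta}\notin\Z$, where $w=e^{2\pi i\ell\tilde{\alpha}/u}$ is not a root of unity so the geometric sum cannot vanish; and $\tilde{\beta}\in\Z$, $\tilde{\alpha}\tilde{\beta}\notin\Z$ with $\ell\equiv 1\pmod{Q}$, where the fixed nonzero polynomial $F$ has only finitely many roots among the distinct values of $w$ --- exhaust all possibilities. What your route buys: a cleaner structural statement (the bounded difference sequence $h$ is itself non-$\mathcal{P}$-recursive, which is sharper than what the paper extracts), and a template that transfers to other fractional-part sums. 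What it costs: you must import two nontrivial facts ($\mathcal{P}$-recursive sequences have D-finite generating functions; D-finite functions have finitely many singularities), and you must rigorize the Bohr-coefficient computation --- best done not by term-by-term averaging of the sawtooth series but by applying Weyl's theorem directly to the Riemann-integrable functions $x\mapsto\{x+\phi\}e^{-2\pi i\ell x}$ along arithmetic progressions of $n$ chosen to freeze the rational phases $(n+j\tilde{\alpha})\tilde{\beta}$ modulo $1$; you flag this, and it is routine. One small correction: the Gelfond--Schneider aside is unnecessary and slightly misleading, since the zeros of the leading coefficient of the ODE need not be algebraic when the recurrence has arbitrary complex coefficients; finiteness of the singular set is all you need, and it suffices.
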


\begin{proof}[Proof of claim]

By \eqref{eqn:sum} (which still holds even though $\tilde{\alpha},
\tilde{\beta}$ are no longer assumed integral), we can write
$I_{\mathcal{T}_{u,v}}(t)$ as
\[ \sum^{\lfloor \frac{t}{\tilde{\alpha}} \rfloor}_{m = 0}\left(
1+\left\lfloor \frac{t-m\tilde{\alpha}}{v}\right\rfloor + \left\lfloor
\frac{t-m\tilde{\alpha}}{u} \right\rfloor\right).\]
We know that $\left\lfloor
\frac{t-m\tilde{\alpha}}{v}\right\rfloor=\left\lfloor
\tilde{\beta}(t-m\tilde{\alpha}) - \frac{t-m\tilde{\alpha}}{u}
\right\rfloor$.  Hence we can rewrite this sum as
\begin{equation}
\label{eqn:rewrittensum}
\op{err}(t)+\sigma(t)+\sum^{\lfloor \frac{t}{\tilde{\alpha}}
  \rfloor}_{m = 0} \lfloor \tilde{\beta}(t-m\tilde{\alpha}) \rfloor,
\end{equation}
where
\begin{flalign*}
\sigma(t) \eqdef \left\{\begin{aligned}1, & \qquad\textrm{if
}t/\tilde{\alpha}\textrm{ is an integer,} \\
0, & \qquad\textrm{otherwise},
\end{aligned}
\right.
\end{flalign*}
and
\[ \op{err}(t) \eqdef \# \left\lbrace 0 \le m \le \lfloor
t/\tilde{\alpha} \rfloor | \lbrace \tilde{\beta}(t-m\tilde{\alpha})
\rbrace > \left\lbrace \frac{t-m\tilde{\alpha}}{u} \right\rbrace
\right\rbrace,\]
where $\lbrace \cdot \rbrace$ denotes the fractional part function.

Now assume that $I_{\mathcal{T}_{u,v}}(t)$ is $\mathcal{P}$-recursive, and write the
recurrence as
\begin{equation}
\label{eqn:recursiondefn}
p_s(t)\Big(\op{err}(t) + q(t)\Big) +  \cdots + p_0(t)
\Big(\op{err}(t-s)+q(t-s)\Big) = 0,
\end{equation}
where $q$ is given by $q \eqdef
\sigma(t)+\sum^{\lfloor \frac{t}{\tilde{\alpha}} \rfloor}_{m = 0}
\lfloor \tilde{\beta}(t-m\tilde{\alpha}) \rfloor.$  The function $q$ is
a quasipolynomial in $t$.    Now write
$\tilde{\alpha} = \frac{p}{q}$ in lowest terms and introduce the
function
\[ \op{head}(t) = \# \left\lbrace 0 \le m \le (q-1) |   \lbrace
\tilde{\beta}(t-m\tilde{\alpha}) \rbrace > \left\lbrace
\frac{t-m\tilde{\alpha}}{u} \right\rbrace \right\rbrace.\]
We know by its definition that $\op{err}(t) - \op{err}(t-p) =
\op{head}(t)$ for all $t \ge p$.  Then for $t \ge p+s$, by applying \eqref{eqn:recursiondefn} twice, we have
\begin{equation}
\label{eqn:recursion}
\begin{aligned}
 & \qquad p_s(t) \op{head}(t) + \cdots + p_0(t)
  \op{head}(t-s)
   + p_s(t)q(t)\\ & -p_s(t-p)q(t-p) + \cdots+p_0(t)q(t-s)
     -p_0(t-p)q(t-p-s)\\ & + \op{err}(t-p)(p_s(t)-p_{s}(t-p)) + \ldots + \op{err}(t-s-p)(p_0(t)-p_0(t-p))  = 0.
\end{aligned}
\end{equation}  We will now derive a contradiction.

{\em Step 4.}  Assume first that $\tilde{\beta}$ is not an integer, and write
$\tilde{\beta}=k/l$.  Let $C$  be the period of the quasipolynomial $q$.
Introduce the set $S = \lbrace 1+iCl\,|\,  i \in \mathbb{Z}_{\ge 0}
\rbrace$.  Then for any $t \in S$, $\lbrace t\tilde{\beta} \rbrace$ is some
fixed nonzero number $a_0$ independent of $t$.  We now claim there is
some $\epsilon > 0$ with the property that if $\lbrace \frac{t}{u}
\rbrace$ is in $(a_0-\epsilon,a_0+\epsilon)$, then $\op{head}(t)$ is
determined by whether or not $a_0 > \lbrace \frac{t}{u} \rbrace$, and
$\op{head}(t)$ will differ depending on whether or not this condition
is met.

To see this, introduce the two homeomorphisms $f_1, f_2$ from $[0,1]$
(mod $1$) to itself given by:
\[ f_1(x) = \lbrace x - \tilde{\alpha}\tilde{\beta} \rbrace, \quad
\quad f_2(x) = \left\lbrace x - \frac{\tilde{\alpha}}{u} \right\rbrace
.\]
Since $\tilde{\alpha}$ is rational, while $u$ is irrational, for
rational $x$ we can never have $f_1^m(x)=f_2^m(x)$ for any positive
integer $m$.  With this in mind, consider
$f_1(a_0),\ldots,f_1^{q-1}(a_0)$.  If we take $\epsilon$ sufficiently
small, we can guarantee that for any $y \in (a_0 - \epsilon, a_0 +
\epsilon)$, $f_2^i(y) \ne f_1^i(a_0)$ for any $1 \le i \le q-1$.  By
shrinking $\epsilon$ if necessary, we can also conclude that for any
$y$ in this interval, $f_2^i(y) \ne 0$.  It follows that if $\lbrace
\frac{t}{u} \rbrace$ is in $(a_0 - \epsilon, a_0 + \epsilon)$, then
$\op{head}(t)$ is determined by whether or not $a_0 > \lbrace
\frac{t}{u} \rbrace$; to emphasize, $\op{head}(t)$ will be different
depending on whether or not this condition is met, as claimed.

{\em Step 5.}  Assume now that $t \in S$, and $t \ge s + p$.

\vspace{3 mm}

{\em Claim 1.}  By decreasing $\epsilon$ if necessary, if $\lbrace
\frac{t}{u} \rbrace$ is in $(a_0-\epsilon,a_0+\epsilon)$, then
$\op{head}(t\pm1),\ldots,\op{head}(t\pm s)$ do not depend on $t$.

{\em Proof of Claim}.  Let $a^{\pm}_1,\ldots,a^{\pm}_s$ be the rational numbers
(mod $1$) defined by $a^{\pm}_i \eqdef \lbrace \tilde{\beta}(t\pm i) \rbrace =
\lbrace a_0 \pm i\tilde{\beta} \rbrace$.  For any $1 \le i \le s$, we
can never have $\lbrace a_0 \pm \frac{i}{u} \rbrace = a^{\pm}_i$, since
$\lbrace a_0 \pm \frac{i}{u} \rbrace$ is irrational.  Consider then
$a^{\pm}_i$ and $z^{\pm}_i= \lbrace a_0 \pm \frac{i}{u}\rbrace$.  Essentially the
same argument\footnote{To elaborate slightly on this, it is probably
  worth emphasizing that we can never have $\lbrace a^{\pm}_i -
  m\tilde{\alpha}\tilde{\beta} \rbrace =\lbrace z^{\pm}_i-\frac{m
    \tilde{\alpha}}{u} \rbrace$ for $0 \le m \le (q-1)$.} as in Step
$5$ allows us to conclude that if $\tilde{\epsilon}$ is sufficiently
small, and $x_i$ is some irrational number in $(z^{\pm}_i
-\tilde{\epsilon},z^{\pm}_i +\tilde{\epsilon})$, then $\# \left\lbrace 0 \le
m \le (q-1) |   \lbrace a^{\pm}_i -m \tilde{\alpha}\tilde{\beta} \rbrace >
\lbrace x_i - \frac{m\tilde{\alpha}}{u} \rbrace \right\rbrace$ does
not depend on $x_i$.  Since we can make $\frac{t \pm i}{u}$ arbitrarily
close to $z^{\pm}_i$ by making $\lbrace \frac{t}{u} \rbrace$ sufficiently
close to $a_0$, the claim follows.

\vspace{3 mm}
We can now complete the proof, in the case where $\tilde{\beta}$ is
not an integer.  Let $p_{s-\tilde{s}}$ be one of the polynomials $p_j$, with the property
that no other $p_j$ has higher degree.   As $t$ ranges over $S$, $\lbrace t/u\rbrace$ is dense
in $(0,1)$.  Take an infinite sequence $t_i$ such that $\lbrace
(t_i-\tilde{s})/u\rbrace \in (a_0,a_0+\epsilon)$ and $t_i - \tilde{s} \in S$.  Now it follows from Claim $1,$ \eqref{eqn:recursion}, and the fact that we are fixing $t$, mod C, that if $i$ is sufficiently large (so that $p_{s-\tilde{s}}(t_i) \ne 0$) then
\begin{equation}
\label{eqn:rationalfunctionequation}
\op{head}(t_i-\tilde{s}) = a(t_i)/p_{s-\tilde{s}}(t_i) - R(t_i)/p_{s-\tilde{s}}(t_i),
\end{equation}
where $a$ is some fixed polynomial of $t_i$, whose coefficients do not depend on $i$.   Meanwhile, the term $R(t)$ is given by
\[ R(t_i) = \sum^s_{j=0} \op{err}(t_i-p-j)(p_{s-j}(t_i)-p_{s-j}(t_i-p)).\]

The term $R(t_i)/p_{s-\tilde{s}}(t_i)$ is controlled by the following:

{\em Claim 2.} Let $x_l \in S$ be a sequence of points tending to $+\infty$.  Then
\[\lim_{l \to \infty} R(x_l)/p_{s-\tilde{s}}(x_l) \to M,\]
where $M$ is some constant.

\begin{proof}[Proof of Claim]

We know that the degree of $p_{s-\tilde{s}}$ is strictly greater than the degree of any of the polynomials $p_{s-j}(t)-p_{s-j}(t-p)$.  Thus, the claim will follow if we can show that
\begin{equation}
\label{eqn:neededbound}
\op{err}(t) = l(t) + o(t),
\end{equation}
for $t \in S$, for some linear polynomial $l$.  This follows by combining \eqref{eqn:rewrittensum} and Lemma~\ref{lem:poonenlemma} (note that we are fixing the equivalence class of $t$, mod $C$).

\end{proof}

Given this claim, we can complete the proof.  Since $\frac{t_i-\tilde{s}}{u} \in (a_0,a_0+\epsilon)$, by Claims 1 and 2, and \eqref{eqn:rationalfunctionequation}, the rational function
$a(t_i)/p_{s-\tilde{s}}(t_i)$ must have a horizontal asymptote as $t_i \to \infty$.   Now choose some collection of $\hat{t}_i-\tilde{s} \in S$ such that $\lbrace
(\hat{t}_i-\tilde{s})/u\rbrace \in (a_0-\epsilon,a_0)$  and $p_{s - \tilde{s}}(\hat{t}_i) \ne 0$.
By again appealing to Claims $1$ and $2$ and \eqref{eqn:rationalfunctionequation}, the rational function
$a(\hat{t}_i)/p_{s -\tilde{s}}(\hat{t}_i)$ has a horizontal asymptote as $\hat{t}_i$ goes to $+\infty$.  Since $a_0 >
\lbrace \frac{\hat{t}_i-\tilde{s}}{u} \rbrace$ while $a_0  < \lbrace \frac{t_i-\tilde{s}}{u}
\rbrace$, these two asymptotes are different, by Step 5; this can not happen for a rational function.  This is a contradiction.

{\em Step 6.}   We can therefore assume that $\tilde{\beta}$ is an
integer.  We can also assume that $q \ge 2$, or else
$\tilde{\alpha}\tilde{\beta}$ would be an integer.  We know that
$\lbrace \tilde{\beta} (t-\tilde{s}) \rbrace = 0$, and $\lbrace
\tilde{\beta}(t-\tilde{s})-\tilde{\alpha}) \rbrace$ is some nonzero number $b_0$
independent of $t$. Essentially the same argument in the previous step
can now be used to conclude that there is some small $\epsilon > 0$,
such that if $\lbrace \frac{t-\tilde{s}-\tilde{\alpha}}{u} \rbrace \in (b_0 -
\epsilon,b_0+\epsilon),$ then $\op{head}(t-\tilde{s})$ will differ based on
whether or not $b_0 > \lbrace \frac{t-\tilde{s}-\tilde{\alpha}}{u} \rbrace$, and
$\op{head}(t-\tilde{s}\pm1),\ldots,\op{head}(t-\tilde{s}\pm s)$ do not depend on $t$.  The fact
that $\lbrace \frac{t-\tilde{s}-\tilde{\alpha}}{u} \rbrace$ is dense as $t-\tilde{s}$
ranges over $S$ now gives a contradiction by essentially repeating the
arguments from the end of the previous step.  This proves
Claim~\ref{clm:importantclaim}.

\end{proof}

{\em Step 7.}  The arguments in the previous steps have proved the
first bullet point, since the sequence of values of a quasipolynomial
is $\mathcal{P}$-recursive.   This, together with
Claim~\ref{clm:importantclaim} now implies
Theorem~\ref{thm:precursion}.

For the second bullet point, if $I_{\mathcal{T}_{u,v}}(t)$ is
primitive and pseudo-integral, then $(u,v)$ must be conjugate
admissible quadratic irrationalities, and in particular
$I_{\mathcal{T}_{u,v}}(t)$ must be given by \eqref{eqn:explicitform}.
Then setting $t=0$ (or any multiple of $\alpha$) gives
\begin{equation}
\label{eqn:constantterm}
\frac{z\beta(\alpha-z)+2\alpha\sigma(t)}{2\alpha} = 1
\end{equation}
for all equivalence classes $z$, by \eqref{eqn:explicitform}.  Now
choose $t$ such that $t \equiv 1$ (mod $\alpha$).  By
\eqref{eqn:constantterm}, this gives
\[ \frac{\beta(\alpha-1)}{2\alpha}=1,\]
so $\beta = 2\alpha/(\alpha-1)$.  The only solutions to this equation
with $\alpha > 1$ and $\beta$ an integer are $(3,3)$ and $(2,4)$.

Conversely, if $\alpha=1$, or $(\alpha,\beta) \in \lbrace (3,3), (2,4) \rbrace$, then $\frac{z\beta(\alpha-z)+2\alpha\sigma(t)}{2\alpha} = 1$ for all equivalence classes $z$, hence the result follows, again by \eqref{eqn:explicitform}.

This completes the proof of Theorem~\ref{thm:irrationalperiodcollapse}.

\end{proof}

We can now give the proof that was owed for Example~\ref{ex:smallesttriangle}.

\begin{proof}

Let $(u,v)$ be admissible.  Then $uv=\frac{\alpha}{\beta}$.  To
minimize the area of $\mathcal{T}_{u,v}$, we would like to maximize
$\frac{\alpha}{\beta}$.  By
Theorem~\ref{thm:irrationalperiodcollapse}, if $ \alpha> 1$, then
$(\alpha,\beta) \in \lbrace (3,3), (2,4) \rbrace$.  The largest
possible value in these two cases is $\frac{\alpha}{\beta}=1$, which
is uniquely obtained by the ``golden mean" triangle.

For $\alpha=1$, the only possible value of $\beta$ that could give an
equally large $\frac{\alpha}{\beta}$ is when $\beta=1$.  There are no
real numbers satisfying $u+v=1, 1/u+1/v=1$, however.
\end{proof}

\begin{remark}
Similar arguments can be used to give another characterization of the
``golden mean" triangle: it is the only pseudo-integral triangle in
the family $\mathcal{T}_{u,v}$ where $u$ and $v$ are quadratic
irrational algebraic integers.  We omit the proof for brevity.

\end{remark}

\subsection{Properties of admissible irrational triangles}

We now prove some properties of the Ehrhart functions of admissible
triangles that mirror properties from the rational case; compare
\cite[\S 3]{Beck-Robins}, \cite{bs}.  For simplicity, we state some of
the results for pseudo-integral triangles, although we expect they
should hold more generally.  Along those lines, recall from
\cite[Lem. 3.9]{Beck-Robins} that if $\mathcal{T}_{u,v}$ is
pseudo-integral, then we can write
\[ \sum_{t \ge 0} I_{\mathcal{T}_{u,v}}(t) z^t = \frac{g_{u,v}(z)}{(1-z)^3},\]
where $g_{u,v}(z)$ is a polynomial of degree at most $2$.

\begin{proposition}  Let $\mathcal{T}_{u,v}$ be admissible.  Then the Ehrhart function of $\mathcal{T}_{u,v}$ satisfies
\label{prop:properties}
\begin{itemize}
\item  (Reciprocity)  If $t$ is positive, then
\[ I_{\mathcal{T}_{u,v}}(-t) = \lbrace \#(t\mathcal{T}^o_{u,v} \cap \mathbb{Z}^{2}) \rbrace+\mu(t),\]
where the superscript $^o$ denotes the interior, and the function
$\mu(t)$ is defined by $\mu(t)=0$ if $\alpha|t$, and $1$ otherwise.
\item  (Nonnegativity) If $\mathcal{T}_{u,v}$ is pseudo-integral, then
  each coefficient of $g_{u,v}$ is nonnegative.
\item  (Monotonicity)  If $\mathcal{T}_{u,v}$ and
  $\mathcal{T}_{u',v'}$ are both pseudo-integral, and
  $\mathcal{T}_{u,v} \subset \mathcal{T}_{u',v'}$, then each
  coefficient of $g_{u,v}$ is less than or equal to the corresponding
  coefficient of $g_{u',v'}$.
\end{itemize}

\end{proposition}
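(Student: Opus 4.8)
My plan is to prove reciprocity first and then deduce nonnegativity and monotonicity from it together with the explicit Ehrhart polynomial; throughout write $I\eqdef I_{\mathcal{T}_{u,v}}$. For reciprocity I would compute $I(-t)$ directly from the closed form \eqref{eqn:explicitform}. The key point is that the constant term there, $\frac{z\beta(\alpha-z)+2\alpha\sigma(t)}{2\alpha}$, is invariant under $z\mapsto\alpha-z$ (and equals $1$ when $\alpha\mid t$), so it takes the same value on the residue of $t$ and of $-t$; hence $I(-t)=I(t)-\beta t$. Separately I would count $\#(t\mathcal{T}^o_{u,v}\cap\Z^2)$ as (closed count) minus (boundary count). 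The boundary of $t\mathcal{T}_{u,v}$ has three edges: after correcting for the doubly counted origin, the two legs carry $\lfloor t/u\rfloor+\lfloor t/v\rfloor+1$ lattice points, and since $t/u,t/v$ are irrational and sum to the integer $\beta t$ their fractional parts satisfy $\lbrace t/u\rbrace+\lbrace t/v\rbrace=1$, so this count is exactly $\beta t$; the slant edge $ux+vy=t$ carries, by $u+v=\alpha$ and the irrationality of $u$, the single point $(t/\alpha,t/\alpha)$ when $\alpha\mid t$ and no lattice point otherwise. Subtracting the total boundary count from $I(t)$ and comparing with $I(-t)=I(t)-\beta t$ yields the reciprocity identity, the correction term coming entirely from this slant-edge count.

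For nonnegativity, when $\mathcal{T}_{u,v}$ is pseudo-integral $I$ is the genuine quadratic $I(t)=\frac{1}{2uv}t^2+\frac12(1/u+1/v)t+1$, so the coefficients of $g_{u,v}$ are $g_0=I(0)=1$, $g_1=I(1)-3I(0)=I(1)-3$, and $g_2=I(2)-3I(1)+3I(0)=I(-1)$. Thus $g_0\ge0$ trivially. For $g_1$, the lattice points on the two legs of $\mathcal{T}_{u,v}$ already number $1/u+1/v$ (again using $\lbrace 1/u\rbrace+\lbrace 1/v\rbrace=1$), which is an integer $\ge3$ by the classification in Theorem~\ref{thm:irrationalperiodcollapse}; hence $I(1)\ge3$ and $g_1\ge0$. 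For $g_2$, reciprocity applied at $t=1$ identifies $g_2=I(-1)$ with $\#(\mathcal{T}^o_{u,v}\cap\Z^2)$ plus the slant-edge count, that is, with the number of lattice points $(x,y)\in\mathcal{T}_{u,v}$ satisfying $x,y\ge1$ (any such point lies either in the interior or on the slant edge, never on a leg); being a cardinality, $g_2\ge0$.

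Monotonicity is then immediate from these interpretations. Write $I'\eqdef I_{\mathcal{T}_{u',v'}}$ and suppose $\mathcal{T}_{u,v}\subseteq\mathcal{T}_{u',v'}$. Since $g_0=1$ for every pseudo-integral triangle, the $g_0$-comparison is an equality. The inclusion gives $\mathcal{T}_{u,v}\cap\Z^2\subseteq\mathcal{T}_{u',v'}\cap\Z^2$, so $I(1)\le I'(1)$ while $I(0)=I'(0)=1$, giving $g_1\le g_1'$. The same inclusion gives $\lbrace(x,y)\in\mathcal{T}_{u,v}\cap\Z^2:x,y\ge1\rbrace\subseteq\lbrace(x,y)\in\mathcal{T}_{u',v'}\cap\Z^2:x,y\ge1\rbrace$, so comparing cardinalities yields $g_2\le g_2'$.

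I expect the reciprocity step to be the main obstacle. The delicate part is the boundary bookkeeping: correctly accounting for the shared origin and for the two non-lattice endpoints of the slant edge, and above all pinning down the slant-edge contribution from the irrationality of $u$. The identity $\lbrace t/u\rbrace+\lbrace t/v\rbrace=1$ is exactly what converts the hypothesis $1/u+1/v=\beta$ into the clean leg count, and is where the irrationality and the integrality of $\beta$ both enter. Once reciprocity is in hand, the identification of $g_2$ with $\#\lbrace(x,y)\in\mathcal{T}_{u,v}\cap\Z^2:x,y\ge1\rbrace$ is the insight that makes both the nonnegativity and the monotonicity of the subtlest coefficient transparent, so that no further case analysis over the classification of pseudo-integral triangles is needed beyond the single bound $1/u+1/v\ge3$.
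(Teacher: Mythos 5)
Your reciprocity argument follows essentially the same strategy as the paper's (compute $I(-t)$ from the closed form \eqref{eqn:explicitform}, then subtract a boundary count from $I(t)$), and your bookkeeping is in fact more careful: the paper's stated leg count $\lfloor t/u\rfloor+\lfloor t/v\rfloor$ omits the origin, while your count $\lfloor t/u\rfloor+\lfloor t/v\rfloor+1=\beta t$ is the right one. But there is a discrepancy you cannot gloss over. Your derivation gives
\[
I_{\mathcal{T}_{u,v}}(-t)=\#(t\mathcal{T}^o_{u,v}\cap\Z^2)+\sigma(t),\qquad \sigma(t)=1 \text{ if } \alpha\mid t,\ 0 \text{ otherwise},
\]
whereas the proposition's $\mu(t)$ is $0$ when $\alpha\mid t$ and $1$ otherwise --- exactly the opposite. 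So when you say your computation ``yields the reciprocity identity,'' it does not yield the identity as stated; it yields a different one. In fact yours is the correct one and the stated one is false: for the golden-ratio triangle ($u=\tau^2$, $v=\tau^{-2}$, $\alpha=\beta=3$, $I(t)=\tfrac12 t^2+\tfrac32 t+1$), at $t=1$ one has $I(-1)=0$, the interior of $\mathcal{T}_{u,v}$ contains no lattice points, and $\mu(1)=1$, so the stated identity would read $0=0+1$; at $t=3$ one has $I(-3)=1$, the interior of $3\mathcal{T}_{u,v}$ is again lattice-point free, and $\mu(3)=0$, so it would read $1=0+0$. Both checks confirm your $\sigma(t)$ and refute the stated $\mu(t)$ (whose two cases appear to be swapped in the paper). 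A blind proof must explicitly record this: either you are proving a corrected statement, or your claim to have derived the stated identity is wrong. As written, your text asserts the latter, which is a genuine flaw in the write-up even though the underlying mathematics is right.

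For the other two bullets your route genuinely differs from the paper's, and is arguably better. The paper proves nonnegativity of $a_2$ by reducing to the inequality $(\beta-\alpha\beta)/\alpha\ge-2$ and checking the cases $(\alpha,\beta)\in\{(3,3),(2,4)\}$ and $\alpha=1$, and its monotonicity argument for $a_2$ is a purely algebraic coefficient comparison whose key inequality is not obviously implied by $1/u\le1/u'$, $1/v\le1/v'$ (the relevant factors can change sign). Your identification
\[
g_2=I(2)-3I(1)+3I(0)=I(-1)=\#\left\lbrace (x,y)\in\mathcal{T}_{u,v}\cap\Z^2\ :\ x\ge1,\ y\ge1\right\rbrace
\]
(interior points plus the possible slant-edge point, using your corrected reciprocity at $t=1$) makes both nonnegativity and monotonicity of the subtlest coefficient immediate and uniform, with no case analysis; that is a real improvement over the paper. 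One small gap remains: for $g_1\ge0$ you need $\beta\ge3$, and when $\alpha=1$ this does not follow from ``the classification'' alone, since Theorem~\ref{thm:irrationalperiodcollapse} places no constraint on $\beta$ in that case. You need the paper's supplementary observation: admissibility with $\alpha=1$ means $u+v=1$ and $uv=1/\beta$ with $u\ne v$ real, forcing $1-4/\beta>0$, i.e.\ $\beta\ge5$.
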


\begin{proof}  For the first bullet point, note that the number of
  lattice points on the boundary of the triangle with vertices
  $(t/u,0), (0,t/v)$ and $(0,0)$ is $\lfloor t/u \rfloor + \lfloor t/v
  \rfloor,$ plus the number of points on the slant edge.  We know that
\[ \lfloor t/u \rfloor + \lfloor t/v \rfloor =  t\beta - 1,\]
and we know that $ux+vy = t$ only if $x=y$.  The first bullet point
now follows by subtracting the number of lattice points on the
boundary from the formula in \eqref{eqn:explicitform}.

For the second and third bullet points, note that if
$g_{u,v}(z)=a_0+a_1z+a_2z^2,$ then $a_0 = 1,$ $a_1 =
I_{\mathcal{T}_{u,v}}(1)-3$, and $a_2 = 3 -
3I_{\mathcal{T}_{u,v}}(1)+I_{\mathcal{T}_{u,v}}(2)$ (here, we are
implicitly using the fact that $I_{\mathcal{T}_{u,v}}(0)=1$, as can be
seen by \eqref{eqn:constantterm}.)  To show the second bullet point,
we therefore first have to show that $I_{\mathcal{T}_{u,v}}(1) \geq 3,$
or equivalently, by \eqref{eqn:explicitform}, that
$\beta/\alpha+\beta \ge 4.$  If $(\alpha,\beta) \in \lbrace (3,3),
(2,4) \rbrace$, then this holds, so by
Theorem~\ref{thm:irrationalperiodcollapse} we can assume that
$\alpha=1$.  Since there are no quadratic irrationalities with
$\alpha=1$ and $\beta \le 3,$ nonnegativity for $a_1$ follows.  For
nonnegativity of $a_2$, we need to show that
$(\beta-\alpha\beta)/\alpha \ge -2.$  If $\alpha=1$, then this is
automatic; if $(\alpha,\beta) \in \lbrace (3,3), (2,4) \rbrace$, then
it holds as well.  This proves the second bullet point.

For the third bullet point, we are given that $1/u \le 1/u'$ and $1/v \le 1/v'$.  That $a_0 \le a'_0$ and $a_1 \le a'_1$ are immediate; to see that $a_2 \le a'_2$, we need to show that $(1/\alpha)(\beta-1) \le (1/\alpha')(\beta'-1)$.  This follows from $1/u \le 1/u', 1/v \le 1/v'.$
\end{proof}

\subsection{Examples in other dimensions}

Here we briefly mention some examples of polytopes in other dimensions that are not rational, but nevertheless have Ehrhart functions that are polynomials.

In dimension $1$, such examples are easy to come by:

\begin{example}
\label{ex:1dex}

Let $\mathcal{P} = [u,v] \subset \mathbb{R},$ where $u$ and $v$ are irrational numbers with $u - v = m$ and $m$ is an integer.  Then for positive integer $t$, $I_{\mathcal{P}}(t)=tm$

\end{example}

\begin{proof} We know that $I_{\mathcal{P}}(t) = \lfloor t u \rfloor - \lfloor tv \rfloor = tm$.

\end{proof}

In higher dimensions, we do not currently know many examples of truly
different character.  However, one has:

\begin{example}
\label{ex:ndex}
Let $\mathcal{P}$ denote the polytope with vertices
\[(0,0,\ldots,0), (1,0,\ldots,0), (0,1,\ldots,0), \]
\[(0,0,1,\ldots,0), \ldots , (0,\ldots,u,0), (0,\ldots,0,v),\]
where $(u,v)$ are positive admissible quadratic irrationalities.  Then $I_{\mathcal{P}}(t)$ is a polynomial in $t$.
\end{example}

A more interesting example is given in dimension three:

\begin{example}
\label{ex:3dex}
The polytope with vertices $(0,0,0)$, $(\frac 12,0,0)$,
$(0,2+\sqrt{2},0)$ and $(0,0,2-\sqrt{2})$ has an
Ehrhart function which is a polynomial.
\end{example}

The proof of Example~\ref{ex:ndex} is immediate; we defer the proof of
Example~\ref{ex:3dex} to \S\ref{sec:tetra}.

\section{Rational examples}

We now give the proof of Theorem~\ref{thm:main theorem-1}.

\begin{proof}

Firstly, it is easy to see that $\gcd(rq,ps)=1$. Let $\xi_m$ denote a
primitive $m$th root of unity.  By \cite[Theorem 2.10]{Beck-Robins},
we have
\begin{align}
\nonumber I_{\mathcal{T}_{q/p, s/r}}&=\frac{1}{2\cdot rq\cdot
  ps}(t\cdot pr)^2+\frac{1}{2}(t\cdot
pr)\left(\frac{1}{rq}+\frac{1}{ps}+\frac{1}{rq\cdot ps}\right)\\[5pt]
\nonumber
&+\frac{1}{4}\left(1+\frac{1}{rq}+\frac{1}{ps}\right)
+\frac{1}{12}\left(\frac{rq}{ps}+\frac{ps}{rq}+\frac{1}{rq\cdot
  ps}\right)\\[5pt]
\nonumber
&+\frac{1}{rq}\sum_{j=1}^{rq-1}\frac{\xi_{rq}^{-jt\cdot
    pr}}{(1-\xi_{rq}^{j\cdot ps})
(1-\xi_{rq}^{j})}+\frac{1}{ps}\sum_{l=1}^{ps-1}\frac{\xi_{ps}^{-lt\cdot pr}}{(1-\xi_{ps}^{l\cdot rq})(1-\xi_{ps}^{l})}\\[5pt]
\nonumber
&=\frac{pr}{2qs}\cdot
t^2+\frac{1}{2}\left(\frac{p}{q}+\frac{r}{s}+\frac{1}{qs}\right)
t+\frac{1}{4}\left(1+\frac{1}{rq}+\frac{1}{ps}\right)\\
\nonumber
& \quad\quad + \frac{1}{12}\left(\frac{rq}{ps}+\frac{ps}{rq}
+\frac{1}{rq\cdot ps}\right)\\[5pt]
\label{equation-1}
&+\frac{1}{rq}\sum_{j=1}^{rq-1}\frac{\xi_{q}^{-jtp}}{(1-\xi_{rq}^{j\cdot ps})(1-\xi_{rq}^{j})}+\frac{1}{ps}\sum_{l=1}^{ps-1}\frac{\xi_{s}^{-ltr}}{(1-\xi_{ps}^{l\cdot rq})(1-\xi_{ps}^{l})}.
\end{align}
Then it suffices to show that
\[
\frac{1}{ps}\sum_{l=1}^{ps-1}\frac{\xi_{s}^{-ltr}}{(1-\xi_{ps}^{l\cdot
    rq})(1-\xi_{ps}^{l})}
\]
is a constant function in $t$. In fact, writing $l=is+u: 0\leq i <p,
0\leq u<s$ and using the fact that $rq\equiv -1$ (mod p), we have
\begin{align}
\nonumber\frac{1}{ps}&\sum_{l=1}^{ps-1}\frac{\xi_{s}^{-ltr}}{(1-\xi_{ps}^{l\cdot
    rq})(1-\xi_{ps}^{l})}\\[5pt]
\nonumber
&=\frac{1}{ps}\sum_{i=1}^{p-1}\frac{1}{(1-\xi_{p}^{irq})(1-\xi_{p}^{i})}
+\frac{1}{ps}\sum_{u=1}^{s-1}\xi_{s}^{-utr}\sum_{i=0}^{p-1}
\frac{1}{(1-\xi_{ps}^{(is+u)rq})(1-\xi_{ps}^{is+u})}\\[5pt]
\label{equation-2}
&=\frac{1}{ps}\sum_{i=1}^{p-1}\frac{1}{(1-\xi_{p}^{irq})(1-\xi_{p}^{i})}
+\frac{1}{ps}\sum_{u=1}^{s-1}\xi_{s}^{-utr}\sum_{i=0}^{p-1}
\frac{1}{(1-\xi_{ps}^{urq-is})(1-\xi_{ps}^{u+is})}
\end{align}
Keeping in mind that $s|(rq+1)$ and
$\gcd\left(\frac{rq+1}{p},s\right)=1$, we find that
\[
ps\nmid u(rq+1),\ \ {\rm and}\ \ ps|u(rq+1)p
\]
for any $1\leq u\leq s-1$. By \cite[Lemma 2.1]{Gardiner-Kleinman}, we deduce that
\begin{align*}
\sum_{i=0}^{p-1}
\frac{1}{(1-\xi_{ps}^{urq-is})(1-\xi_{ps}^{u+is})}=0
\end{align*}
for any $1\leq u\leq s-1$. Therefore, we have
\begin{align*}
\frac{1}{ps}&\sum_{l=1}^{ps-1}\frac{\xi_{s}^{-ltr}}{(1-\xi_{ps}^{l\cdot rq})(1-\xi_{ps}^{l})}=\frac{1}{ps}\sum_{i=1}^{p-1}\frac{1}{(1-\xi_{p}^{irq})(1-\xi_{p}^{i})},
\end{align*}
which is a constant function in $t$. It follows from \eqref{equation-1} that $q$ is a quasi-period of $I_{\mathcal{T}_{q/p, s/r}}(t)$.

\end{proof}

Next we prove Corollary~\ref{pseudo-Corollary}.

\begin{proof}

By Theorem~\ref{thm:main theorem-1}, condition \eqref{pseudo-condition-1} implies that $q$ is a quasi-period of $I_{\mathcal{T}_{q/p, s/r}}(t)$. On the other hand, it is obvious that
\[
(x,y)\mapsto (y,x)
\]
is a bijection between lattice points in triangles $\mathcal{T}_{q/p,s/r}$ and $\mathcal{T}_{s/r,q/p}$. So
we have
\[
I_{\mathcal{T}_{q/p, s/r}}(t)=I_{\mathcal{T}_{s/r, q/p}}(t).
\]
By Theorem~\ref{thm:main theorem-1} again, condition
\eqref{pseudo-condition-2} means that $s$ is also a quasi-period of
$I_{\mathcal{T}_{q/p, s/r}}(t)$. Clearly, $\gcd(q,s)=1$. It follows
that $1$ is a quasi-period of $I_{\mathcal{T}_{q/p,
    s/r}}(t)$. Therefore, the triangle $I_{\mathcal{T}_{q/p, s/r}}(t)$
is a pseudo-integral triangle.

\end{proof}

\subsection{The case where $u=1/v$}

As mentioned in the introduction, for when $s=p$ and $r=q$, one can
also give a sufficient condition for a version of
Theorem~\ref{thm:main theorem-1}.  Specifically, we have:

\begin{theorem}\label{thm2}
Suppose that $p, q$ are relatively prime positive integers.  Then $q$
is a quasiperiod of $I_{\mathcal{T}_{q/p,p/q}}(t)$ if and only if
\begin{align}\label{collapse-condition-2}
p | (q^2+1)\qquad {\rm and}\qquad \gcd\left(\frac{q^2+1}{p},p\right)=1.
\end{align}
\end{theorem}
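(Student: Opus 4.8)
The plan is to dispatch the ``if'' direction immediately from Theorem~\ref{thm:main theorem-1} and concentrate on the converse. Setting $s=p$ and $r=q$ in \eqref{eqn:collapse-condition-1} turns that condition into exactly \eqref{collapse-condition-2} (the divisibility $s\mid p$ becomes the trivial $p\mid p$), so Theorem~\ref{thm:main theorem-1} gives that \eqref{collapse-condition-2} implies $q$ is a quasiperiod. For the ``only if'' direction I would start from the closed form \eqref{equation-1}, specialized to $r=q,\ s=p$ (so $rq=q^2$ and $ps=p^2$). In that formula the coefficients of $t^2$ and $t$ are constants, the first exponential sum is a function of $t$ of period dividing $q$, and the second exponential sum
\[ S(t)\eqdef\frac{1}{p^2}\sum_{l=1}^{p^2-1}\frac{\xi_p^{-ltq}}{(1-\xi_{p^2}^{lq^2})(1-\xi_{p^2}^{l})} \]
is a function of $t$ of period dividing $p$. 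Since $\gcd(p,q)=1$, a quantity that is the sum of a period-$q$ and a period-$p$ function has period dividing $q$ if and only if its period-$p$ part is constant; hence $q$ is a quasiperiod if and only if $S(t)$ is constant in $t$.

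Next I would extract the Fourier content of $S(t)$ over $\Z/p$. Grouping $l=ip+u$ with $0\le i\le p-1$ and $0\le u\le p-1$ gives $\xi_p^{-ltq}=\xi_p^{-utq}$, so
\[ S(t)=\frac{1}{p^2}\sum_{u=0}^{p-1}\xi_p^{-utq}\,T(u),\qquad T(u)\eqdef\sum_{i}\frac{1}{(1-\xi_{p^2}^{(ip+u)q^2})(1-\xi_{p^2}^{ip+u})}. \]
Because $\gcd(q,p)=1$, the maps $t\mapsto\xi_p^{-utq}$ for $u=0,\dots,p-1$ are exactly the $p$ distinct characters of $\Z/p$, so by linear independence $S(t)$ is constant if and only if $T(u)=0$ for every $u\in\{1,\dots,p-1\}$.

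The heart of the argument is an exact evaluation of $T(u)$. Writing $\omega=\xi_p^{i}$, one has $T(u)=\sum_{\omega^p=1}\big((1-A\omega^{c})(1-B\omega)\big)^{-1}$ with $A=\xi_{p^2}^{uq^2}$, $B=\xi_{p^2}^{u}$ and $c\equiv q^2\pmod p$; note the coupling through $\omega^{c}$ means one cannot simply invoke the vanishing lemma used for Theorem~\ref{thm:main theorem-1}, which presupposed $q^2\equiv-1\pmod p$. Instead I would regard $\sum_{\omega^p=1}\big((1-x\omega^{c})(1-y\omega)\big)^{-1}$ as a rational function of $(x,y)$, compute it for $|x|,|y|<1$ by expanding both geometric series and using $\sum_{\omega^p=1}\omega^{m}=p\,[\,p\mid m\,]$, and then specialize to $x=A,\ y=B$ (both $1-A^p$ and $1-B^p$ are nonzero for $u\ne0$). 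After simplifying the exponents via $q^2a-(q^2a\bmod p)=p\floor{q^2a/p}$ this yields
\[ T(u)=\frac{p}{(1-\xi_p^{uq^2})(1-\xi_p^{u})}\left(1+\sum_{a=1}^{p-1}\xi_p^{\,u(\floor{q^2a/p}+1)}\right). \]
Since the prefactor never vanishes for $u\in\{1,\dots,p-1\}$, the requirement $T(u)=0$ for all such $u$ is equivalent, by Fourier inversion on $\Z/p$ applied to the multiset $\{0\}\cup\{(\floor{q^2a/p}+1)\bmod p:1\le a\le p-1\}$, to the statement that $\floor{q^2a/p}\bmod p$, as $a$ runs over $\{1,\dots,p-1\}$, is a permutation of $\{0,1,\dots,p-2\}$.

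It remains to identify this permutation condition with \eqref{collapse-condition-2}, which I expect to be the main obstacle, since it requires passing from the analytic/character statement to a purely arithmetic one. For the forward implication I would use a sum-of-values constraint: if $\floor{q^2a/p}\bmod p$ is a permutation of $\{0,\dots,p-2\}$ then $\sum_{a=1}^{p-1}\floor{q^2a/p}\equiv\binom{p-1}{2}\pmod p$, while the elementary evaluation $\sum_{a=1}^{p-1}\floor{q^2a/p}=\tfrac12(p-1)(q^2-1)$ (using that $q^2a\bmod p$ runs over $\{1,\dots,p-1\}$) then forces $p\mid(q^2+1)$. Once $q^2\equiv-1\pmod p$, writing $k=(q^2+1)/p$ gives $\floor{q^2a/p}=ka-1$ exactly for $1\le a\le p-1$, so the permutation condition holds precisely when $a\mapsto ka$ is a bijection of $\Z/p$, i.e.\ when $\gcd(k,p)=1$; this is exactly the second half of \eqref{collapse-condition-2}, and it also recovers the ``if'' direction independently. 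The conceptual crux is thus the closed-form evaluation of $T(u)$ revealing the floor expression, together with the observation that a single sum-of-floors congruence already pins down $q^2\equiv-1\pmod p$.
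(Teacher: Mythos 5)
Your proposal is correct, and although it follows the paper's skeleton for the reductions, it settles the decisive step by a genuinely different route. Like the paper, you obtain the ``if'' direction from Theorem~\ref{thm:main theorem-1} with $s=p$, $r=q$, and for ``only if'' you argue exactly as the paper does: the period-$p$ sum $S(t)$ appearing in \eqref{equation-1} must be constant since it also acquires period $q$ and $\gcd(p,q)=1$, and then expanding over the characters $t\mapsto\xi_p^{-utq}$ and invoking their linear independence forces the inner sums $T(u)$ to vanish for $1\le u\le p-1$. The divergence is in what one does with $T(u)=0$. The paper rewrites $T(u)$ into the opposite-direction form of \eqref{equation-2} and then cites \cite[Lemma 2.1]{Gardiner-Kleinman} to translate vanishing into $p\mid u(q^2+1)$ and $p^2\nmid u(q^2+1)$; but, as you correctly observe, that rewriting (turning $\xi_{p^2}^{(ip+u)q^2}$ into $\xi_{p^2}^{uq^2-ip}$) is justified in the paper only under $p\mid(q^2+1)$, which in the ``only if'' direction is precisely what is being proved, so the published argument glosses over a real point there. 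Your substitute---evaluating $\sum_{\omega^p=1}\bigl((1-x\omega^c)(1-y\omega)\bigr)^{-1}$ as a rational function, specializing to get $T(u)$ equal to a nonvanishing prefactor times $1+\sum_{a=1}^{p-1}\xi_p^{u(\floor{q^2a/p}+1)}$, and using Fourier inversion on $\Z/p$ to reduce the vanishing to the condition that $\floor{q^2a/p}\bmod p$ permutes $\{0,\dots,p-2\}$---is valid for arbitrary $q^2\bmod p$ and needs no external lemma. The closing arithmetic also checks out: $\sum_{a=1}^{p-1}\floor{q^2a/p}=\frac{(p-1)(q^2-1)}{2}$ together with the permutation congruence forces $p\mid(q^2+1)$ (the factor of $2$ causes no trouble even when $p$ is even), after which $\floor{q^2a/p}=ka-1$ with $k=(q^2+1)/p$ identifies the permutation condition with $\gcd(k,p)=1$. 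What the paper's route buys is brevity, by outsourcing the character-sum evaluation; what yours buys is a self-contained argument that works without first knowing $q^2\equiv-1\pmod p$, thereby repairing the one delicate step in the paper's ``only if'' direction, and it recovers the ``if'' direction at no extra cost.
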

\begin{proof} Clearly, the ``if " part follows from the proof of
  Theorem~\ref{thm:main theorem-1}. We now proceed to the proof of the
  ``only if" part.

Suppose that $q$ is a quasi-period of $I_{\mathcal{T}_{q/p,p/q}}(t)$. By
\eqref{equation-1}, we deduce that
\[f_{p,q}(t):=\frac{1}{p^2}\sum_{l=1}^{p^2-1}\frac{\xi_{p}^{-ltq}}{(1-\xi_{p^2}^{lq^2})(1-\xi_{p^2}^{l})}\]
is a periodic function of $t$ with period $q$. Clearly, $p$ is also a
period of $f_{p,q}(t)$. Since $(p,q)=1$, we deduce that $f_{p,q}(t)$
is a constant function of $t$.
It follows from \eqref{equation-2} that
\begin{align*}
\frac{1}{p^2}\sum_{u=1}^{p-1}\xi_{p}^{-utq}\sum_{i=0}^{p-1}
\frac{1}{(1-\xi_{p^2}^{uq^2-ip})(1-\xi_{p^2}^{u+ip})}=C
\end{align*}
for some constant $C$. Keeping in mind the fact that $\gcd(p,q)=1$, we have
\[
\{\xi_{p}^{-utq}: 1\leq u\leq p-1\}=\{\xi_{p}^{jt}: 1\leq j\leq p-1 \}.
\]
So
$$\{1,\xi_{p}^{-tq},\xi_{p}^{-2tq},\ldots,\xi_{p}^{-(p-1)tq}\}
=\{1,\xi_{p}^{t},\xi_{p}^{2t},\ldots,\xi_{p}^{(p-1)t}\},$$
 which consists of $p$ linearly independent functions from
 $\mathbb{N}$ to $\mathbb{C}$. Hence we have
\[
\sum_{i=0}^{p-1}
\frac{1}{(1-\xi_{p^2}^{uq^2-ip})(1-\xi_{p^2}^{u+ip})}=0
\]
for any $1\leq u\leq p-1$. By applying \cite[Lemma 2.1]{Gardiner-Kleinman}, we deduce that
\[
p | u(q^2+1)\qquad {\rm and}\qquad p^2\nmid u(q^2+1)
\] for each $1\leq u\leq p-1$. Now we can conclude immediately that
\[
p |(q^2+1)\qquad {\rm and}\qquad \gcd\left(p,\frac{q^2+1}{p}\right)=1.
\]
\end{proof}

\subsection{The k-Fibonacci numbers}

In \cite{Gardiner-Kleinman}, it was shown that $\mathcal{T}_{q/p,p/q}$
is pseudo-integral if and only if $p=q=1$ or
$\{p,q\}=\{F_{2k-1},F_{2k+1}\}$ for
some $k\geq 1$, where $p$ and $q$ are relatively prime positive
integers, $F_m$ denotes the $m$th Fibonacci number.  We
now further study the relationship between the period collapse problem
and recursive sequences, by proving a similar result, involving two
consecutive terms in the sequence of generalized Fibonacci numbers.

Recall first for any integer $k\geq 1$, the $k$-Fibonacci sequence $\{F_n(k)\}$, defined recursively as follows:
\[
F_0(k)=0,F_1(k)=1,\ F_n(k)=kF_{n-1}(k)+F_{n-2}(k)\ \ (n\geq 2).
\]
Clearly, when $k=1$, we get the Fibonacci sequence. For notational simplicity, for any $k\geq 1, n\geq 2$, we let
$$
I_{k,n}(t):=I_{\mathcal{T}_{F_{n}(k)/F_{n-1}(k),F_{n-1}(k)/F_n(k)}}(t).
$$
In the following we shall consider quasi-period collapse in $I_{k,n}(t)$. To this end, we need the following immediate facts:

{\bf Fact 1}: For any $k,n\geq 1$, $\gcd(F_n(k), F_{n-1}(k))=1$ and $\gcd(F_n(k),k)=1$.

{\bf Fact 2}: For any $k,n\geq 1$, we have
\[
F_n(k)^2-kF_{n-1}(k)F_{n}(k)-F_{n-1}(k)^2+(-1)^n=0.
\]
Both Fact 1 and Fact 2 can be verified immediately by induction on $n$. We only give the proof of Fact 2 here. Clearly, the fact holds for $n=1$. Assume $n\geq 2$ and Fact 2 is true for $n-1$ and then we have
\begin{align*}
F_{n}&(k)^2-kF_{n-1}(k)F_{n}(k)-F_{n-1}(k)^2+(-1)^n\\[5pt]
&=(kF_{n-1}(k)+F_{n-2}(k))^2-kF_{n-1}(k)(kF_{n-1}(k)+F_{n-2}(k))-F_{n-1}(k)^2+(-1)^n\\[5pt]
&=kF_{n-1}(k)F_{n-2}(k)+F_{n-2}(k)^2-F_{n-1}(k)^2+(-1)^n\\[5pt]
&=-(F_{n-1}(k)^2-kF_{n-1}(k)F_{n-2}(k)-F_{n-2}(k)^2+(-1)^{n-1})\\
&=0.
\end{align*}

It follows from Fact 1 and Fact 2 that, when $n$ is even, both $(p,q)=(F_{n-1}(k), F_n(k))$ and $(p,q)=(F_{n+1}(k),F_n(k))$ satisfy condition \eqref{collapse-condition-2}.  We therefore get:

\begin{theorem}\label{period-collapse-3}
For any $k\geq 1$ and even integer $n\geq 2$, $F_n(k)$ is a common quasi-period of $I_{k,n}(t)$ and $I_{k,n+1}(t)$.
\end{theorem}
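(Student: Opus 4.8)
The plan is to derive Theorem~\ref{period-collapse-3} as two immediate applications of the ``if'' direction of Theorem~\ref{thm2}, in both cases taking $F_n(k)$ to be the integer called $q$ there. The discussion preceding the statement has already checked that, for even $n$, both pairs $(p,q) = (F_{n-1}(k), F_n(k))$ and $(p,q) = (F_{n+1}(k), F_n(k))$ satisfy condition~\eqref{collapse-condition-2}, and by Fact~1 each pair is coprime, so the hypotheses of Theorem~\ref{thm2} are in place in both cases.

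For $I_{k,n}(t) = I_{\mathcal{T}_{F_n(k)/F_{n-1}(k), F_{n-1}(k)/F_n(k)}}(t)$, the triangle already has the form $\mathcal{T}_{q/p, p/q}$ with $q = F_n(k)$ and $p = F_{n-1}(k)$. Since $(p,q) = (F_{n-1}(k), F_n(k))$ satisfies~\eqref{collapse-condition-2}, Theorem~\ref{thm2} gives at once that $q = F_n(k)$ is a quasiperiod of $I_{k,n}$.

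For $I_{k,n+1}(t) = I_{\mathcal{T}_{F_{n+1}(k)/F_n(k), F_n(k)/F_{n+1}(k)}}(t)$, the naive matching would have $q = F_{n+1}(k)$ and would yield $F_{n+1}(k)$, not $F_n(k)$, as a quasiperiod. The one point requiring care is therefore to first apply the reflection symmetry $I_{\mathcal{T}_{u,v}}(t) = I_{\mathcal{T}_{v,u}}(t)$ (the bijection $(x,y) \mapsto (y,x)$ used in the proof of Corollary~\ref{pseudo-Corollary}) to rewrite $I_{k,n+1}(t) = I_{\mathcal{T}_{F_n(k)/F_{n+1}(k), F_{n+1}(k)/F_n(k)}}(t)$. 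Now $q = F_n(k)$ and $p = F_{n+1}(k)$, and since $(p,q) = (F_{n+1}(k), F_n(k))$ satisfies~\eqref{collapse-condition-2}, a second application of Theorem~\ref{thm2} shows $F_n(k)$ is a quasiperiod of $I_{k,n+1}$ as well. Hence $F_n(k)$ is a common quasiperiod, as claimed.

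There is no serious obstacle remaining once Theorem~\ref{thm2} is granted: the entire arithmetic content is the identity $F_n(k)^2 + 1 = F_{n-1}(k) F_{n+1}(k)$ for even $n$, which is exactly Fact~2 with $(-1)^n = 1$ after using $F_{n+1}(k) = k F_n(k) + F_{n-1}(k)$, and which simultaneously exhibits the required divisibilities and identifies the cofactors $F_{n\pm 1}(k)$, whose coprimality with each other follows from Fact~1. The only genuine subtlety is the bookkeeping of which Fibonacci number plays the role of $q$, which is what forces the symmetry step in the $I_{k,n+1}$ case.
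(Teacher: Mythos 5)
Your proof is correct and takes essentially the same route as the paper: the paper also deduces Theorem~\ref{period-collapse-3} by checking, via Facts~1 and~2, that both pairs $(p,q)=(F_{n-1}(k),F_n(k))$ and $(p,q)=(F_{n+1}(k),F_n(k))$ satisfy condition~\eqref{collapse-condition-2}, and then invoking the ``if'' direction of Theorem~\ref{thm2}. Your explicit use of the reflection $(x,y)\mapsto(y,x)$ to recast $I_{k,n+1}$ as $I_{\mathcal{T}_{q/p,p/q}}$ with $q=F_n(k)$, $p=F_{n+1}(k)$ only spells out a bookkeeping step the paper leaves implicit.
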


\subsection{Tetrahedra}
\label{sec:tetra}

We now give a few higher dimensional examples of period collapse.

Recall first the sequence given by $a_1=2, a_2=3, a_3=10, a_4=17$ and
\begin{equation}
\label{eqn:recursion2}
a_n=6a_{n-2}-a_{n-4}.
\end{equation}
It follows from \cite[Thm. 1.6.i]{Gardiner-Kleinman} that for each
$n\geq 1$, the triangle with vertices $(0,0)$,
$\left(\frac{a_{2n+1}}{a_{2n}},0\right)$ and
$\left(0,\frac{2a_{2n}}{a_{2n+1}}\right)$ is a pseudo-integral
triangle with Ehrhart polynomial
\[
I_n(t)=(t+1)^2.
\]

Using this we can show:
\begin{theorem}\label{OEIS-3}
Let $\{a_n\}$ be the sequence defined by \eqref{eqn:recursion2}.  Then
for any $n\geq 1$, the tetrahedron $T_n$ with vertices $(0,0,0)$,
$\left(\frac{1}{2},0,0\right)$,
$\left(0,\frac{a_{2n+1}}{a_{2n}},0\right)$ and
$\left(0,0,\frac{2a_{2n}}{a_{2n+1}}\right)$ is a pseudo-integral
tetrahedron.
\end{theorem}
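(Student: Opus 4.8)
The plan is to compute $I_{T_n}(t)$ directly by slicing $tT_n$ along the $x$-direction and recognizing each slice as an integer dilate of the triangle whose Ehrhart polynomial $(t+1)^2$ is recorded just above the theorem. Writing $u = a_{2n}/a_{2n+1}$ and $v = a_{2n+1}/(2a_{2n})$, the tetrahedron is
\[
T_n = \{(x,y,z)\ge 0 : 2x + uy + vz \le 1\},
\]
so a lattice point of $tT_n$ can have $x = m$ precisely when $0 \le m \le \lfloor t/2 \rfloor$, and for each such $m$ the number of admissible $(y,z) \in \mathbb{Z}_{\ge 0}^2$ equals the count of lattice points in the dilate $(t-2m)\mathcal{T}_{u,v}$ of the triangle with intercepts $1/u = a_{2n+1}/a_{2n}$ and $1/v = 2a_{2n}/a_{2n+1}$.

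The crucial identification is that this triangle $\mathcal{T}_{u,v}$ is exactly the pseudo-integral triangle appearing in the statement just before the theorem, whose Ehrhart polynomial is $I_{\mathcal{T}_{u,v}}(s) = (s+1)^2$ for nonnegative integers $s$. Since $t - 2m$ is a nonnegative integer throughout the range of summation, I may substitute this polynomial to obtain
\[
I_{T_n}(t) = \sum_{m=0}^{\lfloor t/2 \rfloor} (t - 2m + 1)^2 .
\]
It then remains only to evaluate this sum and verify that it is a polynomial in $t$. I would split according to the parity of $t$: when $t = 2k$ the summand runs over the squares of the odd integers $1, 3, \ldots, 2k+1$, while when $t = 2k+1$ it runs over $4$ times the squares $1^2, \ldots, (k+1)^2$. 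In both cases a routine sum-of-squares computation collapses the result to the single cubic
\[
I_{T_n}(t) = \frac{(t+1)(t+2)(t+3)}{6} = \binom{t+3}{3},
\]
independent of the parity of $t$. As this is a genuine polynomial, $T_n$ is pseudo-integral, proving the claim.

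Because the substantive input (pseudo-integrality of the triangular slice, together with the exact polynomial $(t+1)^2$) is furnished by the cited result, I do not expect any serious obstacle. The argument is essentially the observation that the $x$-intercept $1/2$ forces every horizontal slice to be an integer dilate of one fixed triangle, after which the proof reduces to telescoping a sum of squares. The only point requiring a little care is confirming that the two parity cases yield the \emph{same} cubic, so that $I_{T_n}$ is a bona fide polynomial rather than merely a quasipolynomial of period $2$.
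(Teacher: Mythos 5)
Your proposal is correct and follows essentially the same route as the paper: slice $tT_n$ at integer heights $x=0,\ldots,\lfloor t/2\rfloor$, identify each slice with the dilate $(t-2x)\mathcal{T}_{u,v}$ of the pseudo-integral triangle from the cited result, and evaluate $\sum_{x=0}^{\lfloor t/2\rfloor}(t-2x+1)^2$. Your closed form $\binom{t+3}{3}$ agrees with the paper's $\frac{1}{6}t^3+t^2+\frac{11}{6}t+1$, and your explicit parity check is just a spelled-out version of the summation the paper performs.
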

\begin{proof} Let $f_n(t)$ denote the Ehrhart function for $T_n$. Then
  for any positive integer $t$, we have
\begin{align*}
f_n(t)&=\#\{(x,y,z)\in\mathbb{Z}^{3}\st
2x+\frac{a_{2n}}{a_{2n+1}}y+\frac{a_{2n+1}}{2a_{2n}}z\leq
t,\ x,y,z\geq 0\}\\[5pt]
&=\sum_{x=0}^{\lfloor\frac{t}{2}\rfloor}\#\{(y,z)\in\mathbb{Z}^2\st
\frac{a_{2n}}{a_{2n+1}}y+\frac{a_{2n+1}}{2a_{2n}}z\leq t-2x,\ y,z\geq
0\}\\[5pt]
&=\sum_{x=0}^{\lfloor \frac{t}{2}\rfloor}I_{n}(t-2x)=\sum_{x=0}^{\lfloor\frac{t}{2}\rfloor}(t-2x+1)^2\\[5pt]
&=\frac{1}{6}t^3+t^2+\frac{11}{6}t+1,
\end{align*}
where $I_n(t)$ denotes the Ehrhart function of the triangle with
vertices $(0,0),\left(\frac{a_{2n+1}}{a_{2n}},0\right)$ and
$\left(0,\frac{2a_{2n}}{a_{2n+1}}\right)$.
\end{proof}

We now give the proof of Example~\ref{ex:3dex}.  Note that
Example~\ref{ex:3dex} is natural to consider, in view of
Theorem~\ref{OEIS-3}.  It is easy to show that
\[
\lim_{n\rightarrow\infty} \frac{a_{2n+1}}{a_{2n}}=2+\sqrt{2},\ \ {\rm and}\ \ \lim_{n\rightarrow\infty} \frac{2a_{2n}}{a_{2n+1}}=2-\sqrt{2}.
\]
Theorem~\ref{OEIS-3} states that for any $n\geq 1$, the tetrahedron with vertices
$$(0,0,0), \left(\frac{1}{2},0,0\right), \left(0,\frac{a_{2n+1}}{a_{2n}},0\right)\ \ {\rm and},\  \left(0,0,\frac{2a_{2n}}{a_{2n+1}}\right)$$
is a pseudo-integral tetrahedron with the  Ehrhart polynomial
\[
f(t)=\frac{1}{6}t^3+t^2+\frac{11}{6}t+1,
\]
which is independent of $n$.  Thus, it is reasonable to expect that
the Ehrhart function of the limiting tetrahedron $T$ with vertices
$(0,0,0), \left(\frac{1}{2},0,0\right), \left(0,2+\sqrt{2},0\right)$
and $\left(0,0,2-\sqrt{2}\right)$ is also equal to the polynomial
$f(t)$, and one can indeed show this by using Theorem~\ref{OEIS-3}
plus a limiting argument.  We instead give a more direct proof that
does not require Theorem~\ref{OEIS-3}:

\begin{proposition}
The Ehrhart function of the irrational tetrahedron $T$ with vertices
$(0,0,0), \left(\frac{1}{2},0,0\right), \left(0,2+\sqrt{2},0\right)$
and $\left(0,0,2-\sqrt{2}\right)$ is the polynomial
$f(t)=\frac{1}{6}t^3+t^2+\frac{11}{6}t+1$.
\end{proposition}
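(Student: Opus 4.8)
The plan is to reduce to the two-dimensional problem by slicing $T$ with the planes $x = \text{const}$, and then to recognize the resulting triangular slices as the admissible triangle with $(\alpha,\beta)=(2,4)$, whose Ehrhart function is pinned down by Theorem~\ref{thm:irrationalperiodcollapse}.

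First I would record the defining inequalities of $T$. Its slant face passes through $(1/2,0,0)$, $(0,2+\sqrt{2},0)$, $(0,0,2-\sqrt{2})$, so in intercept form it reads $2x + \frac{y}{2+\sqrt{2}} + \frac{z}{2-\sqrt{2}} = 1$; rationalizing the coefficients via $\frac{1}{2\pm\sqrt2} = \frac{2\mp\sqrt2}{2}$ turns this into $2x + \frac{2-\sqrt2}{2}y + \frac{2+\sqrt2}{2}z = 1$. Writing $u = \frac{2-\sqrt2}{2}$ and $v = \frac{2+\sqrt2}{2}$, the Ehrhart function of $T$ is then
\[ f(t) = \#\Bigl\{(x,y,z)\in\mathbb{Z}^3_{\ge 0} : 2x + uy + vz \le t\Bigr\} = \sum_{x=0}^{\lfloor t/2\rfloor} I_{\mathcal{T}_{u,v}}(t-2x), \]
exactly as in the slicing step of the proof of Theorem~\ref{OEIS-3}; here the inner count is the Ehrhart function of the triangle $\mathcal{T}_{u,v}$ evaluated at $t-2x$ (and at $t-2x=0$ it equals $1$).

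The key observation is that $\mathcal{T}_{u,v}$ is an admissible irrational triangle. Indeed $u+v = 2$ and $1/u + 1/v = (2+\sqrt2) + (2-\sqrt2) = 4$, so $(u,v)$ satisfies \eqref{eqn:keyequation} with $(\alpha,\beta) = (2,4)$, and $u/v = 3 - 2\sqrt2$ is irrational. Since $(2,4)$ is one of the two exceptional pairs, Theorem~\ref{thm:irrationalperiodcollapse}(ii) guarantees that $\mathcal{T}_{u,v}$ is pseudo-integral; substituting $\alpha = 2$, $\beta = 4$ into the explicit formula \eqref{eqn:explicitform} makes the residue-dependent term $z\beta(\alpha-z) + 2\alpha\sigma(t)$ constant and yields $I_{\mathcal{T}_{u,v}}(t) = (t+1)^2$ for every nonnegative integer $t$.

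Plugging this in gives $f(t) = \sum_{x=0}^{\lfloor t/2\rfloor}(t-2x+1)^2$, which is precisely the sum already evaluated in the proof of Theorem~\ref{OEIS-3} and equals $\frac{1}{6}t^3 + t^2 + \frac{11}{6}t + 1$. I expect no serious obstacle: the only genuinely nontrivial input is the identification of the slice with the $(\alpha,\beta)=(2,4)$ admissible triangle, which is what lets Theorem~\ref{thm:irrationalperiodcollapse} produce the clean quadratic $(t+1)^2$ with no residue correction. The one point deserving care is that the closed form for $\sum_{x=0}^{\lfloor t/2\rfloor}(t-2x+1)^2$ really is the same cubic for $t$ even and $t$ odd — this is the actual period-collapse phenomenon, and it is checked by splitting into the two parities.
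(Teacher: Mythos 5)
Your proposal is correct, and every computation in it checks out: $(u,v)=\bigl(\tfrac{2-\sqrt2}{2},\tfrac{2+\sqrt2}{2}\bigr)$ is indeed admissible with $(\alpha,\beta)=(2,4)$, formula \eqref{eqn:explicitform} then gives $\frac{4t^2+8t+z\cdot 4(2-z)+4\sigma(t)}{4}=(t+1)^2$ for both residues $z=0,1$, and the final sum $\sum_{x=0}^{\lfloor t/2\rfloor}(t-2x+1)^2=\frac{(t+1)(t+2)(t+3)}{6}$ agrees for both parities of $t$. The route differs from the paper's in one meaningful way. Both arguments start from the identical slicing of the count along the rational direction $x$, and both end with the same parity-checked sum; but where you recognize each slice as the admissible $(2,4)$ triangle and quote Theorem~\ref{thm:irrationalperiodcollapse} (really just its Step 1, the explicit formula \eqref{eqn:explicitform}) to obtain $(t+1)^2$, the paper deliberately keeps the proof self-contained: inside each slice it re-runs that two-dimensional argument by hand, splitting the lattice points into $y\ge z$ and $y<z$ and using the floor identity $\lfloor (t-2i-2j)(2+\sqrt2)\rfloor = 4(t-2i-2j)+\lfloor -(t-2i-2j)(1+\tfrac{\sqrt2}{2})^{-1}\rfloor$, which is exactly the diagonal trick from Step 1 of Theorem~\ref{thm:irrationalperiodcollapse} specialized to this case. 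Your version is more modular and arguably more illuminating -- it exhibits the proposition as a corollary of the easy (``if'') direction of Theorem~\ref{thm:irrationalperiodcollapse} plus an elementary sum, and it explains structurally why the answer matches Theorem~\ref{OEIS-3}, since the slice is precisely the limit of the rational triangles appearing there. The paper's version buys independence: it uses no prior theorem at all (its stated goal was to avoid Theorem~\ref{OEIS-3} and limiting arguments), at the cost of repeating floor-function manipulations that your citation packages away.
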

\begin{proof} Let $g(t)$ denote the Ehrhart function of $T$. Then we have
\begin{align*}
&g(t)=\#(tT\cap \mathbb{Z}^3)\\[5pt]
=&\#\{(x,y,z)\in\mathbb{Z}^3\st 2x+\left(1-\frac{\sqrt{2}}{2}\right)y+\left(1+\frac{\sqrt{2}}{2}\right)z\leq t, x,y,z\geq 0\}\\[5pt]
=&\sum_{i=0}^{\lfloor\frac{t}{2}\rfloor}\#\{(i,y,z)\st (i,y,z)\in tT\cap\mathbb{Z}^3\}\\[5pt]
=&\sum_{i=0}^{\lfloor\frac{t}{2}\rfloor}\#\{(i,y,z)\st (i,y,z)\in
tT\cap\mathbb{Z}^3, y\geq z\}\\
& \qquad +\sum_{i=0}^{\lfloor\frac{t}{2}\rfloor}\#\{(i,y,z)\st
(i,y,z)\in tT\cap\mathbb{Z}^3,y<z\}\\[5pt]
=&\ \sum_{i=0}^{\lfloor\frac{t}{2}\rfloor}\sum_{j=0}^{\lfloor\frac{t-2i}{2}\rfloor}\#\{(i,y,j)\st (i,y,j)\in tT\cap\mathbb{Z}^3, y\geq j\}\\[5pt]
&\qquad +\sum_{i=0}^{\lfloor\frac{t}{2}\rfloor}\sum_{k=0}^{\lfloor\frac{t-2i-1+\frac{\sqrt{2}}{2}}{2}\rfloor}\#\{(i,k,z)\st
(i,k,z)\in tT\cap\mathbb{Z}^3, k<z\}\\[5pt]
=&\sum_{i=0}^{\lfloor\frac{t}{2}\rfloor}\sum_{j=0}^{\lfloor\frac{t-2i}{2}\rfloor}
\left(\left\lfloor\frac{t-2i-2j}{1-\frac{\sqrt{2}}{2}}\right\rfloor+1\right)
+\sum_{i=0}^{\left\lfloor\frac{t}{2}\right\rfloor}\sum_{k=0}^{\left\lfloor\frac{t-2i-1+\frac{\sqrt{2}}{2}}{2}\right\rfloor}
\left\lfloor\frac{t-2k-2i}{1+\frac{\sqrt{2}}{2}}\right\rfloor\\[5pt]
=&\sum_{i=0}^{\lfloor\frac{t}{2}\rfloor}\sum_{j=0}^{\lfloor\frac{t-2i}{2}\rfloor}
\left(\left\lfloor\frac{t-2i-2j}{1-\frac{\sqrt{2}}{2}}\right\rfloor+1+\left\lfloor\frac{t-2i-2j}{1+\frac{\sqrt{2}}{2}}\right\rfloor\right)
\end{align*}
Keeping in mind that
\[
\frac{1}{1-\frac{\sqrt{2}}{2}}=2+\sqrt{2},\ \ \frac{1}{1-\frac{\sqrt{2}}{2}}=2-\sqrt{2},
\]
we deduce that
\begin{align*}
\left\lfloor\frac{t-2i-2j}{1-\frac{\sqrt{2}}{2}}\right\rfloor&=\lfloor(t-2i-2j)(2+\sqrt{2})\rfloor\\[5pt]
&=\lfloor(t-2i-2j)(4-(2-\sqrt{2}))\rfloor\\[5pt]
&=4t-8i-8j+\left\lfloor-\frac{t-2i-2j}{1+\frac{\sqrt{2}}{2}}\right\rfloor.
\end{align*}
So we have
\begin{align*}
g(t)&=\left\{
       \begin{array}{ll} \displaystyle
         \frac{t}{2}+1+\sum_{i=0}^{\lfloor\frac{t}{2}\rfloor}\sum_{j=0}^{\lfloor\frac{t-2i}{2}\rfloor}
(4t-8i-8j), & \hbox{$t$\ {\rm is\ even};} \\[15pt]
         \displaystyle\sum_{i=0}^{\lfloor\frac{t}{2}\rfloor}\sum_{j=0}^{\lfloor\frac{t-2i}{2}\rfloor}
(4t-8i-8j), & \hbox{$t$\ {\rm is\ odd}}
       \end{array}
     \right.\\[7pt]
&=\frac{1}{6}t^3+t^2+\frac{11}{6}t+1.
\end{align*}
\end{proof}

\end{document}